\newtheorem{theorem}{Theorem}[section]
\newtheorem{lemma}[theorem]{Lemma}
\newtheorem{proposition}[theorem]{Proposition}
\newtheorem{corollary}[theorem]{Corollary}
\newtheorem*{theorem*}{Theorem}
\theoremstyle{remark}
\numberwithin{equation}{section}
\newcommand{\C}{\mathbb{C}}
\newcommand{\T}{\mathcal{T}}
\renewcommand{\C}{\mathcal{C}}
\newcommand{\Mod}{\mathrm{Mod}}
\newcommand{\Out}{\mathrm{Out}}
\newcommand{\diam}{\mathrm{diam}}
\begin{document}
\title[Ratio Optimizers]{ Pseudo-Anosovs optimizing the ratio of Teichm\"uller to curve graph translation length}

\author{Tarik Aougab and Samuel J. Taylor}

\address{Department of Mathematics \\ Brown University \\ 151 Thayer Street, Providence, RI 02912\\ USA}
\email{tarik\_aougab@brown.edu}

\address{Department of Mathematics \\ Yale University \\ 10 Hillhouse Avenue, New Haven, CT 06510 \\ USA}
\email{s.taylor@yale.edu}

\date{\today}

\subjclass[2000]{46L55}

\keywords{curve complex, mapping class group}
\begin{abstract}

 Given $\phi$ a pseudo-Anosov map, let $\ell_\T(\phi)$ denote the translation length of $\phi$ in the Teichm\"uller space, and let $\ell_\C(\phi)$ denote the stable translation length of $\phi$ in the curve graph. Gadre--Hironaka--Kent--Leininger showed that, as a function of Euler characteristic $\chi(S)$, the minimal possible ratio $\tau(\phi) = \frac{\ell_\T(\phi)}{\ell_\C(\phi)}$ is $\log(|\chi(S)|)$, up to uniform additive and multiplicative constants. In this short note, we introduce a new construction of such \emph{ratio optimizers} and demonstrate their abundance in the mapping class group.
Further, we show that ratio optimizers can be found arbitrarily deep into the Johnson filtration as well as in the point pushing subgroup. 
\end{abstract}
\maketitle

%%%%%%%%%%%%%%%%%%%%%%%%%%%%%%%%%%%%%%%%%%%%%%%%
\section{Introduction}
%%%%%%%%%%%%%%%%%%%%%%%%%%%%%%%%%%%%%%%%%%%%%%%%

Let $S=S_{g,p}$ denote the orientable surface of genus $g$ with $p$ punctures, and let $\omega(g,p)=\omega(S)=3g+p-4$ be its \emph{complexity}. Let $\mbox{Mod}(S)$ denote the mapping class group of $S$, $\mbox{Teich}(S)$ the Teichm\"uller space equipped with the Teichm\"uller metric, and $\mathcal{C}(S)$ the curve graph of $S$.

Consider the coarsely-defined map $\pi_{g,p} : \mbox{Teich}(S)\rightarrow \mathcal{C}^0(S)$, which sends a marked hyperbolic surface to the simple closed curve(s) of shortest length. The map $\pi_{g,p}$ was originally studied by Masur-Minsky, who, as part of the proof of the $\delta$-hyperbolicity of $\mathcal{C}(S)$, demonstrated the existence of a constant $K=K(g,p)$ such that $\pi_{g,p}$ is coarsely $K$-- Lipschitz \cite{MasurMinsky}. Recall that a map $f:X \to Y$ between metric spaces is \emph{coarsely $K$--Lipschitz} if there is an $L\ge 0$ such that $d_Y(f(a),f(b)) \le K \cdot d_X(a,b) +L$ for all $a,b \in X$.

Let $K(g,p)$ denote the optimal possible value of the Lipschitz constant for $\pi_{g,p}$ as a function of $S_{g,p}$; that is,
\[ K(g,p)= \inf \{c \in \mathbb{R}: \pi_{g,p} \hspace{1 mm} \mbox{is coarsely $c$-Lipschitz} \}. \]
Gadre--Hironaka--Leininger--Kent showed that $K(S_{g,0})\sim \frac{1}{\log(g)}$ \cite{GHKL}. Thus, not only is $\pi_{g,p}$ coarsely Lipschitz, but it is coarsely contracting, and the optimal contraction factor approaches $0$ as $g \rightarrow \infty$. Following this work, Valdivia showed that for $r \in \mathbb{Q}$ a fixed rational number, the optimal Lipschitz constant for a sequence of surfaces $S_{g_{i}, p_{i}}$ with $g_{i}/p_{i}= r$, also decays logarithmically in complexity (relative to constants that a priori depend on $r$) \cite{valdivia2014lipschitz}. 

In one direction, \cite{GHKL} follow Masur and Minsky's original proof, while controlling the portions of the argument that a priori grow with the complexity of $S$. Conversely, to show that a Lipschitz constant on the order of $1/\log(g)$ is optimal, they construct, for each $g$, a pseudo-Anosov map $\psi \in \mbox{Mod}(S_{g})$ such that the ratio $\tau(\psi)=\ell_\T(\psi)/\ell_\C(\psi)$ of its translation length in $\mbox{Teich}(S)$, denoted $\ell_\T(\psi)$, to its stable translation length in $\mathcal{C}(S)$, denoted $\ell_\C(\psi)$, is on the order of $\log(g)$.  

The purpose of this short note is to give a new construction of pseudo-Anosov maps for which $\tau(\psi)$ is optimal, i.e. on the order of $\log(\omega(S))$. We call such pseudo-Anosovs \textit{ratio optimizers} and our construction shows their abundance in the mapping class group:

\begin{theorem} \label{Infinite} There exists a function $f(\omega)= O(\log(\omega))$, and a Teichm\"uller disk $\mathcal{D} \subset \mbox{Teich}(S_{g,p})$ such that there are infinitely many conjugacy classes of primitive pseudo-Anosovs $\psi$ with $\tau(\psi)=\frac{\ell_\T(\psi)}{\ell_\C(\psi)}< f(\omega(g,p)))$, and the invariant axis of $\psi$ is contained in $\mathcal{D}$. 
\end{theorem}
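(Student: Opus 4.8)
The plan is to build the Teichm\"uller disk $\mathcal{D}$ from Thurston's construction applied to a pair of simple closed curves $a,b$ that fill $S_{g,p}$ as efficiently as possible, i.e. with geometric intersection number $i(a,b)$ of the minimal possible order $\asymp \omega(g,p)$ (any filling pair satisfies $i(a,b)\gtrsim\omega$, and one can realize $i(a,b)\asymp\omega$). Thurston's construction produces a half-translation structure $q$ on $S$ in which $a$ sweeps out the horizontal cylinders and $b$ the vertical ones, together with a representation $\rho\colon\langle T_a,T_b\rangle\to\mathrm{PSL}(2,\R)$ sending $T_a,T_b$ to parabolics whose off-diagonal entry is $\asymp\mu$, where $\mu\asymp i(a,b)\asymp\omega$. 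Every element of $\langle T_a,T_b\rangle$ acts affinely on $(S,q)$, so the axis of every pseudo-Anosov in this group lies in the single disk $\mathcal{D}=\mathrm{SL}(2,\R)\cdot q$. Provided $i(a,b)$ exceeds a universal threshold---automatic for a filling pair---the matrices $\rho(T_a),\rho(T_b)$ generate a group in which a ping-pong argument produces infinitely many primitive, pairwise non-conjugate hyperbolic elements, hence infinitely many pseudo-Anosovs with axes in $\mathcal{D}$. This disposes of the ``abundance'' and ``single disk'' requirements at once.

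I would then index the family by reduced words $W$ in $T_a^{\pm1},T_b^{\pm1}$ of syllable length $L=L(W)$, keeping all exponents equal to $\pm1$ (crucially never taking high powers, which would inflate $\ell_\T$ through annular twisting while leaving $\ell_\C$ bounded, pushing the ratio to infinity). For the Teichm\"uller length I would use that $\rho(\psi_W)$ is a product of $2L$ parabolics each of operator norm $\lesssim\omega$; submultiplicativity of the norm gives a dilatation $\lambda(\psi_W)\lesssim\omega^{CL}$ and hence $\ell_\T(\psi_W)=\log\lambda(\psi_W)\lesssim L\log\omega$. This is the only estimate on $\ell_\T$ I need, since the theorem requires merely an upper bound on the ratio.

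The heart of the argument, and the step I expect to be the main obstacle, is the matching lower bound $\ell_\C(\psi_W)\gtrsim L$ on the curve-graph translation length, with a multiplicative constant uniform in both $L$ and $\omega$. The mechanism is that one period of the axis of $\psi_W$ threads, in order, through the thin regions associated to the cusp curves $a$ and $b$, and since $a,b$ fill we have $d_{\C(S)}(a,b)\ge 3$; the systoles along the axis therefore pass $\asymp L$ times between the $a$-region and the $b$-region. To convert this into a genuine lower bound on translation length I would invoke the Masur--Minsky fact \cite{MasurMinsky} that the systole path of a Teichm\"uller geodesic is an unparametrized quasigeodesic in $\C(S)$, and show the bi-infinite syllable sequence $\cdots a\,b\,a\,b\cdots$ is itself a quasigeodesic---so that the $\asymp L$ alternations cannot backtrack and instead accumulate to net progress $\asymp L$ per period. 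The delicate point is \emph{uniformity}: the quasigeodesic constants, and hence the constant $c_0$ in $\ell_\C(\psi_W)\ge c_0 L$, must not degrade as $i(a,b)\asymp\omega\to\infty$. Intuitively larger intersection only makes $a$ and $b$ ``more filling'' and should help curve-graph progress, and I would make this precise through uniform control of subsurface projections of $a$ and $b$ (bounded in every proper subsurface, realizing distance $\ge 3$ in $\C(S)$), feeding either the distance formula or a direct $\delta$-hyperbolic no-backtracking estimate.

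Granting the uniform lower bound, the theorem follows immediately: for every word $W$ in the family, $\tau(\psi_W)=\ell_\T(\psi_W)/\ell_\C(\psi_W)\lesssim (L\log\omega)/L=\log\omega$, so a single function $f(\omega)=O(\log\omega)$ works for all of them, the axes all lie in $\mathcal{D}$, and the words realize infinitely many primitive conjugacy classes. Finally, the flexibility to place ratio optimizers arbitrarily deep in the Johnson filtration or inside the point-pushing subgroup should come entirely from the freedom in choosing $a$ and $b$: one selects the filling pair so that the relevant word $W$ lands in the desired subgroup, a choice that affects neither the intersection-number bookkeeping of $\ell_\T$ nor the curve-graph estimate for $\ell_\C$.
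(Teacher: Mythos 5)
Your overall strategy is the same as the paper's (Thurston's construction on a minimally intersecting filling pair, words with bounded exponents, the norm estimate $\ell_\T(\psi_W)\lesssim L\log\omega$, and a hoped-for lower bound $\ell_\C(\psi_W)\gtrsim L$), but the proposal has two genuine gaps, and the step you yourself flag as the ``main obstacle'' is exactly where your setup breaks. You insist on words in $T_a^{\pm1},T_b^{\pm1}$ with all exponents $\pm1$, ``never taking high powers.'' The paper's mechanism for the lower bound is Mangahas' ping-pong lemma: if $d_\alpha(C_B,a^kC_B)>2M+4$ and $d_\beta(C_A,b^kC_A)>2M+4$ for all $k\ne0$ (where $M$ is the bounded geodesic image constant), then every word $w$ moves $\alpha$ or $\beta$ by at least $|w|_s$ in $\C(S)$, which yields a uniform $(3,7)$--quasi-isometric embedding of the Bass--Serre tree and hence $|w|_s\le\ell_\C(w)$. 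These hypotheses are verified via the estimate $d_\alpha(\gamma,T_\alpha^N\gamma)\ge N-2$, which forces the twist power to be at least $B=2M+7$; with exponent $1$ the annular projection displacement is far below the BGIT threshold, so nothing forces a geodesic in $\C(S)$ to track the syllable structure, and your claimed ``no-backtracking'' estimate has no proof. The correct fix is cheap and is what the paper does: take $a=T_\alpha^B$, $b=T_\beta^B$ with $B$ \emph{uniform} (independent of $\omega$), and require $|w|=|w|_s$, i.e.\ exponents $\pm1$ in $a,b$ rather than in the twists themselves. This costs only replacing $\log(2i)$ by $\log(2B\,i)$ in the Teichm\"uller bound, preserving $f(\omega)=O(\log\omega)$. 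Your proposed alternative route through systole paths of Teichm\"uller geodesics being unparametrized quasigeodesics is not developed into an argument and, in particular, does not address the uniformity in $\omega$ that you correctly identify as the delicate point.

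The second gap is the counting of conjugacy classes. You assert that ping-pong in $\mathrm{PSL}(2,\R)$ gives ``infinitely many primitive, pairwise non-conjugate'' elements and that ``the words realize infinitely many primitive conjugacy classes.'' But the theorem requires infinitely many conjugacy classes \emph{in} $\Mod(S_{g,p})$, and non-conjugacy in the free group $\langle a,b\rangle$ (or in the Veech group, or in $\mathrm{PSL}(2,\R)$) does not transfer: two elements of the Veech group could a priori be conjugate, or have conjugate powers, via a mapping class far from the Teichm\"uller disk; likewise primitivity in the subgroup does not give primitivity in $\Mod(S)$. The paper spends its entire Section 4 on this: it arranges the words $w_i$ so their axes share longer and longer segments of a fixed quasigeodesic $\mathcal{R}$ in $\C(S)$ without fellow-traveling, and then uses acylindricity of the $\Mod(S)$-action on $\C(S)$ (Bowditch) to show that only finitely many of the $w_i$ can have powers conjugate into the maximal cyclic subgroup of any fixed $w_1$, after which a diagonal/subsequence argument finishes. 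Some argument of this kind (or another genuinely mapping-class-group-level input) is needed; as written, this step of your proposal is an unsupported assertion.
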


\noindent We will see in Corollary \ref{cor:opt} that the function $f(\omega)$ can be taken to be $\log(2B \cdot \omega)$ where $B\ge 1$ is a constant not depending on $\omega$.

In addition to establishing the abundance of ratio optimizers, our methods show that ratio optimizers can be constructed in subgroups of mapping class groups which are well-known not to contain pseudo-Anosov mapping classes that minimize Teichm\"uller space translation length alone. In particular, we build ratio optimizers arbitrarily deep into the Johnson filtration as well as in the point pushing subgroup for a mapping class group of a surface with a single puncture.

For a group $\Gamma$, let $\Gamma^{(k)}$ denote the $k$th term of its lower central series. That is $\Gamma^{(1)} = [\Gamma,\Gamma]$ is the commutator subgroup and $\Gamma^{(k+1)} = [\Gamma^{(k)},\Gamma]$. For any $k\ge0$ there is a surjective homomorphism
\[
\Mod(S) \to \Out(\pi_1(S) / \pi_1(S)^{(k)}),
\]
whose kernel, denoted $J_k$, is the \emph{$k$th term of the Johnson filtration}. These subgroups were introduced by Johnson in \cite{johnson1983survey}. Note that $J_1$ is the Torelli subgroup of $\Mod(S)$ and $J_2$ is the so-called Johnson kernel.

\begin{theorem}\label{th:J_filt}
There exists a uniform constant $C_J\ge0 $ satisfying the following. Let $S = S_{g,p}$, with $g\ge 2$ and $p=0$ or $p=1$, and denote by $J_k(S)$ the $k$th term of the Johnson filtration of $\Mod(S)$. Then there exists $\phi_k \in J_k(S)$ with 
$$
\tau(\phi_k) =  \frac{\ell_\T(\phi)}{\ell_\C(\phi)} \le C_J \log \omega(S).
$$
That is, there are ratio optimizers arbitrarily deep into the Johnson filtration.
\end{theorem}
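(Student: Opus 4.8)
The plan is to reduce the problem to producing pseudo-Anosov ratio optimizers inside the lower central series of the Torelli group, and then to realize such optimizers as iterated commutators of Torelli elements whose axes all lie in a single Teichm\"uller disk. The starting point is the algebraic fact that the Johnson filtration satisfies the commutator estimate $[J_i,J_j]\subseteq J_{i+j}$. This is the Andreadakis relation, immediate from the description of $J_k$ as the classes acting trivially on $\pi_1(S)/\pi_1(S)^{(k)}$ together with the standard commutator calculus on the lower central series of $\pi_1(S)$. Iterating it with $i=j=1$ yields $\gamma_k(J_1)\subseteq J_k$, where $\gamma_k$ denotes the $k$th term of the lower central series of the Torelli group $J_1$. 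Hence it suffices to exhibit, for each $k$, a ratio optimizer lying in $\gamma_k(J_1)$.

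To produce these, let $V\le \Mod(S)$ be the stabilizer of the Teichm\"uller disk $\mathcal{D}$ furnished by Theorem \ref{Infinite}. Since $V$ contains the infinitely many hyperbolic ratio optimizers with axes in $\mathcal{D}$, it is a non-elementary Fuchsian group. I would next pass to its Torelli part $V_0=V\cap J_1$: as a normal subgroup of the non-elementary group $V$, $V_0$ is either trivial or itself non-elementary, and one arranges the construction so that $V_0\neq 1$ --- automatic in the point-pushing model below, and arrangeable for $p=0$ by choosing $\mathcal{D}$ (e.g.\ from a square-tiled surface) whose homology representation $V\to \mathrm{Sp}(2g,\Z)$ has non-elementary kernel. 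Then $V_0$ contains a rank-two Schottky subgroup $F=\langle a,b\rangle$, every nontrivial element of which is pseudo-Anosov with axis in $\mathcal{D}$. The $k$-fold iterated commutator $c_k=[[\cdots[a,b],a],\dots,a]$ is a nontrivial element of $F$, hence pseudo-Anosov with axis in $\mathcal{D}$, and it lies in $\gamma_k(F)\subseteq \gamma_k(J_1)\subseteq J_k$.

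It remains to bound $\tau(c_k)$, and here I would import the geometric mechanism behind Theorem \ref{Infinite}. On $\mathcal{D}$ the Teichm\"uller translation length of $c_k$ equals, up to the fixed normalization, the hyperbolic translation length of its image in $V\subset \mathrm{PSL}(2,\R)$, while its curve-graph translation length is measured by the progress its axis makes under the systole projection $\pi_{g,p}|_{\mathcal{D}}\colon \mathcal{D}\to \mathcal{C}(S)$. The crucial input is that $\mathcal{D}$ can be taken so that this projection, rescaled by $1/\log\omega(S)$, is a uniform quasi-isometric embedding; equivalently $\ell_\C(\psi)\ge \ell_\T(\psi)/(B\log\omega(S))$ for \emph{every} pseudo-Anosov $\psi$ with axis in $\mathcal{D}$. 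Granting this, $\tau(c_k)\le C_J\log\omega(S)$ with $C_J$ independent of $k$, which is the assertion of the theorem.

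The main obstacle is precisely this last uniformity. The black-box statement of Theorem \ref{Infinite} asserts only that infinitely many axes in $\mathcal{D}$ are optimal, whereas the $c_k$ are prescribed elements whose axes could a priori make long excursions into the cusps of $\mathcal{D}/V$, where the systole projection stalls and $\ell_\C$ lags behind $\ell_\T$. Overcoming this requires showing, inside the proof of Theorem \ref{Infinite}, that every axis in $\mathcal{D}$ --- not merely a distinguished family --- projects to an unparametrized quasigeodesic making definite curve-graph progress, i.e.\ that the cusps of $\mathcal{D}$ sit at curve-graph depth $\Theta(\log\omega(S))$ and are crossed with bounded cost. For the once-punctured case this is cleanest in the point-pushing model: point-pushes are automatically Torelli, so $V_0$ is non-elementary for free; by Kra's theorem $\mathrm{Push}(\delta)$ is pseudo-Anosov exactly when $\delta$ fills; the Johnson depth is governed by the clean criterion that $\mathrm{Push}(\delta)\in J_k$ iff $\delta\in\gamma_k(\pi_1(S_g))$; and since a nontrivial normal subgroup of $\pi_1(S_g)$ has full limit set, one may choose $\delta\in\gamma_k(\pi_1(S_g))$ filling and geometrically efficient, so that the ratio bound for $\mathrm{Push}(\delta)$ follows from the explicit control on dilatation and curve-graph length of point-pushing maps.
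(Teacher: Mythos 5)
Your reduction $\gamma_k(J_1)\subseteq J_k$ is fine and is essentially the same algebraic step the paper takes (it invokes the central series property $[J_k,J_1]\subseteq J_{k+1}$ of the Johnson filtration). The fatal problem is the geometric half. Your ``crucial input'' --- that the disk $\mathcal{D}$ can be chosen so that $\ell_\C(\psi)\ge \ell_\T(\psi)/(B\log\omega(S))$ for \emph{every} pseudo-Anosov $\psi$ with axis in $\mathcal{D}$ --- is not merely unproven; it is false, so no amount of extra work inside the proof of Theorem \ref{Infinite} can supply it. Take $\mathcal{D}=\mathbb{D}(\alpha,\beta)$ and $a=T_\alpha^B$, $b=T_\beta^B$ as in the paper, and consider $w_N=a^Nb$. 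Its axis lies in $\mathcal{D}$ for every $N$, and it is cyclically reduced with $|w_N|_s=2$, so Corollary \ref{cor:curve_trans} gives $\ell_\C(w_N)\le 6$; but under the representation (\ref{rep}) its matrix has trace $2-Nm^2$ (where $m=B\, i(\alpha,\beta)$), so $\ell_\T(w_N)\gtrsim \log N\to\infty$. Thus $\tau(w_N)\to\infty$ along axes in $\mathcal{D}$: optimality is a property of the \emph{word} (all syllables of length one), not of membership in the Veech group. This is exactly why your plan of taking an arbitrary Schottky subgroup $F=\langle a,b\rangle\le V\cap J_1$ and forming iterated commutators $c_k$ cannot close: you have no control over how $c_k$, written in twist generators, distributes between word length and syllable length, which is the quantity the ratio actually depends on.

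The paper's proof supplies precisely the two ingredients your outline is missing. First, instead of an abstract non-elementary Torelli subgroup of the Veech group, it uses Lemma \ref{SepPair} to produce a filling pair of \emph{separating} curves $\alpha,\beta$ with $i(\alpha,\beta)\le C\,\omega(S)$; then $a=T_\alpha^B$, $b=T_\beta^B$ lie in $J_1$ by construction, so Torelli membership is free and quantitative. Second, it chooses $w_1=aba$, $w_2=bab$ and defines $\phi_k$ as the $k$-fold iterated commutator alternating between them: these words stay reduced with every syllable of length one, so $|\phi_k|=|\phi_k|_s$, and then Corollary \ref{cor:curve_trans} gives $\ell_\C(\phi_k)\ge \tfrac13|\phi_k|$ while Lemma \ref{lem:trace} gives $\ell_\T(\phi_k)\le |\phi_k|\log(2B\,i(\alpha,\beta))$, so the word length cancels in the ratio and $\tau(\phi_k)\le C_J\log\omega(S)$ uniformly in $k$. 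Your point-pushing fallback has the same character of gap: you assert without argument that one can find filling $\delta\in\gamma_k(\pi_1(S_g))$ whose point-push satisfies the ratio bound uniformly in $k$, but the required control on dilatation versus curve-graph length is exactly what must be proved, and in any case that route addresses only $p=1$, not the closed case of Theorem \ref{th:J_filt}.
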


We remark that Theorem \ref{th:J_filt} is entirely different from the situation of minimizing $\ell_{\T}(\phi)$ alone. In fact, Farb--Leininger--Margalit \cite{farb2008lower} have shown that the minimal Teichm\"uller space translation length among pseudo-Anosov mapping class in $J_1$ is uniformly bounded from above and below, independent of genus.  This is in contrast to work of Penner who shows that among all pseudo-Anosov homeomorphisms this quantity is on the order of $1/g$ \cite{penner1991bounds}.

Finally, for a surface $S_{g,1}$ with $g\ge2$, we denote the kernel of the natural map 
\[
\Mod(S_{g,1}) \to \Mod(S_{g,0})
\]
by $PP_g$. This is the point pushing subgroup of $\Mod(S_{g,1})$; it consists of mapping classes which are isotopic to the identity after ignoring the puncture. Similar to the situation discussed above, it is known that pseudo-Anosov mapping classes in  $PP_g$ cannot minimize Teichm\"uller space translation length \cite{dowdall2011dilatation}. However, this is not an issue for ratio optimizers:

\begin{theorem} \label{PP}
There exists a uniform constant $C_P\ge0$ satisfying the following. Let $S= S_{g,1}$ with $g\ge2$ and let $PP_{g} \le \Mod(S)$ be the point pushing subgroup of its mapping class group. Then there is $\phi \in PP_{g}$ with 
\[
\tau(\phi) = \frac{\ell_\T(\phi)}{\ell_\C(\phi)} \le C_P \log \omega(S).
\]
\end{theorem}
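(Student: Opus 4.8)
The plan is to realize the required ratio optimizer as a \emph{point push}. Recall the Birman exact sequence
\[
1 \longrightarrow \pi_1(S_{g,0},z_0) \xrightarrow{\ \mathrm{Push}\ } \Mod(S_{g,1}) \longrightarrow \Mod(S_{g,0}) \longrightarrow 1,
\]
which identifies $PP_g$ with $\pi_1(S_{g,0})$ via $\gamma \mapsto \mathrm{Push}(\gamma)$. By Kra's theorem, $\mathrm{Push}(\gamma)$ is pseudo-Anosov precisely when the free homotopy class of $\gamma$ fills $S_{g,0}$. Since $\mathrm{Push}(\gamma) \in PP_g$ automatically, the theorem reduces to producing, for each $g$, a \emph{filling} loop $\gamma = \gamma_g$ with $\tau(\mathrm{Push}(\gamma_g)) \le C_P \log\omega(S)$. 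I would search for $\gamma_g$ compatibly with the framework of Theorem~\ref{Infinite}: I want the invariant quadratic differential of $\mathrm{Push}(\gamma_g)$ on $S_{g,1}$ to generate a Teichm\"uller disk whose curve-graph geometry I can control, so that $\gamma_g$ plays the role of the filling data in the general construction. Note that, because an affine automorphism of a flat surface is never trivial on $S_{g,0}$, the optimizer here cannot be one of the affine maps produced in Theorem~\ref{Infinite}; the point push genuinely requires its own flat structure and its own estimates.

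The first quantitative input is an upper bound on $\ell_\T$. Writing $\gamma_g$ as a product of simple loops exhibits $\mathrm{Push}(\gamma_g)$ as an explicit product of Dehn twists, which carries an invariant (bigon) train track whose combinatorial complexity is governed by the self-intersection number $i(\gamma_g,\gamma_g)$. Taking $\gamma_g$ to fill $S_{g,0}$ as efficiently as possible, with $i(\gamma_g,\gamma_g) = \Theta(g)$, I would bound the dilatation from above, aiming for a dilatation that is \emph{polynomial} in $g$, so that
\[
\ell_\T(\mathrm{Push}(\gamma_g)) = \log\lambda(\mathrm{Push}(\gamma_g)) \le C \log g \le C'\log\omega(S).
\]
The content here is the relationship between point-push dilatation and the geometry of $\gamma_g$, governed by \cite{dowdall2011dilatation}; arranging that the estimate yields polynomial rather than exponential dilatation is itself a real constraint on the family $\gamma_g$ and must be built into the choice.

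The heart of the argument, and the step I expect to be the main obstacle, is a matching \emph{lower} bound on $\ell_\C$ that is uniform in $g$, namely $\ell_\C(\mathrm{Push}(\gamma_g)) \ge c > 0$. This is delicate because $\ell_\C$ generically decays with complexity (the global minimum of $\ell_\C$ over all pseudo-Anosovs tends to $0$), and because $\mathrm{Push}(\gamma_g)$ acts trivially on $\mathcal{C}(S_{g,0})$, so all of its curve-graph translation must come from the winding of the marked point. To establish the bound I would show that the invariant foliations of $\mathrm{Push}(\gamma_g)$ fill $S_{g,1}$ \emph{robustly}: concretely, that the shortest-curve sequence along the invariant axis has uniformly bounded subsurface projections to every proper essential subsurface, so that by the Masur--Minsky bounded-geodesic-image machinery the axis is a parametrized quasigeodesic in $\mathcal{C}(S_{g,1})$ with constants independent of $g$, forcing $\ell_\C \ge c$. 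Securing these projection bounds uniformly is exactly where the choice of filling family must be made with care.

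Combining the two estimates gives $\tau(\mathrm{Push}(\gamma_g)) = \ell_\T/\ell_\C \le (C'\log\omega)/c = C_P\log\omega(S)$, as desired. Finally, I would remark that this is the same mechanism underlying Theorem~\ref{th:J_filt}: since $\mathrm{Push}(\gamma)$ lies in $J_k$ whenever $\gamma$ lies in a deep enough term of the lower central series of $\pi_1(S_{g,0})$, imposing that additional constraint on the filling family $\gamma_g$ places the same optimizers arbitrarily deep in the Johnson filtration. For the present theorem no such constraint is needed, and any sufficiently efficient filling family $\gamma_g$ suffices.
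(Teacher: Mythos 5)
Your proposal is an outline whose two load-bearing steps are never carried out, and both are exactly where the content of the theorem lies. (a) For the upper bound you need $\ell_\T(\mathrm{Push}(\gamma_g)) \le C'\log \omega$, i.e.\ dilatation \emph{polynomial} in $g$; but the general relationship between point-push dilatation and self-intersection number (the content of \cite{dowdall2011dilatation}, which you invoke) gives bounds that are exponential in $i(\gamma_g,\gamma_g)$, so with $i(\gamma_g,\gamma_g)=\Theta(g)$ you only get $\ell_\T \le Cg$. You acknowledge that polynomial dilatation ``must be built into the choice'' of the family, but you exhibit no family and no argument, so this step is a hope rather than a proof. (b) For the lower bound you need $\ell_\C(\mathrm{Push}(\gamma_g)) \ge c > 0$ uniformly in $g$; you correctly flag this as the main obstacle and describe the kind of uniform subsurface-projection control that would suffice, but again no family is specified and nothing is verified. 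Since the theorem is precisely the conjunction of these two estimates for some family, the proposal does not constitute a proof.

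Moreover, the assertion that steered you away from the simpler route is false: an affine automorphism of a flat structure on $S_{g,1}$ \emph{can} be trivial in $\Mod(S_{g,0})$ --- this happens exactly when the quadratic differential has an odd-order (e.g.\ one-pronged) singularity at the puncture, and it is the mechanism behind Kra's theorem itself. The paper exploits this: it constructs a filling pair $(\alpha,\beta)$ on $S_{g,1}$ with $i(\alpha,\beta)$ polynomial (quadratic) in $g$ such that $\alpha$ and $\beta$ become isotopic after forgetting the puncture. Concretely, take a minimally intersecting filling pair $(\rho,\delta)$ on $S_{g,0}$, puncture inside an annulus bounded by parallel copies $\delta_1,\delta_2$ of $\delta$, set $f_\delta = T_{\delta_1}^{3}T_{\delta_2}^{-3}$ (a point push), and let $\alpha=\rho$, $\beta = f_\delta(\rho)$; that the pair fills is checked with the twist inequality of Lemma \ref{TwistInequality}. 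Then $\phi = T_\alpha^B T_\beta^{-B}$ lies in $PP_g$, since its image in $\Mod(S_{g,0})$ is $T_{\bar\alpha}^B T_{\bar\alpha}^{-B} = 1$, and Theorem \ref{StretchBound} applied to the word $ab^{-1}$ immediately yields $\tau(\phi) \le \log\bigl(2B\cdot i(\alpha,\beta)\bigr) \le C_P \log\omega(S)$. In other words, both of your missing estimates come for free from machinery already established in the paper: the curve-graph lower bound from the quasi-isometrically embedded tree (Corollary \ref{cor:curve_trans}) and the dilatation upper bound from the trace estimate (Lemma \ref{lem:trace}). The fix is not to refine your two estimates for general point pushes, but to notice that Thurston's construction does produce point pushes once the filling pair is chosen to be puncture-isotopic.
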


\noindent \textbf{Acknowledgments:} Both authors are partially supported by the National Science Foundation. The first by NSF grant DMS- 1502623 and the second by NSF grant DMS-1400498.

\section{Background}
\subsection{Curves, filling pairs and projections.} \label{sec:curves}
Let $S_{g,p}$ denote the genus $g$ surface with $p \geq 0$ punctures. The \textit{complexity} of $S$ is defined as $\omega(S)= \omega(g,p)= 3g+p-4$. For all surfaces in this paper we assume $\omega(S)>0$. A simple closed curve $c$ on $S$ is \textit{essential} if it is not homotopically trivial and if it is not homotopic into a neighborhood of a puncture. Given two essential simple closed curves $\alpha, \beta$ their \textit{geometric intersection number}, denoted $i(\alpha, \beta)$, is defined as 
\[ i(\alpha, \beta):= \min_{x \sim \alpha} |x \cap \beta|, \]
where $\sim$ denotes homotopy. If $|\alpha \cap \beta| = i(\alpha, \beta)$, we say $\alpha$ and $\beta$ are in \textit{minimal position}. Note that any collection of pairwise non-homotopic essential curves can be placed in pairwise minimal position on $S$. Indeed, when $S$ is equipped with any complete hyperbolic metric, any pair of closed geodesics is in minimal position and there exists a unique geodesic in each free homotopy class of essential curve. 

A pair of essential simple closed curves $\alpha, \beta$ are in minimal position on a closed surface $S_{g}$ if and only if no complementary component of $\alpha \cup \beta$ is a \textit{bigon}, a disk whose boundary is comprised of one arc of $\alpha$ and one of $\beta$ \cite{farb2011primer}. 

A collection of curves $\Gamma=\left\{\gamma_{1},..., \gamma_{n}\right\}$ in pairwise minimal position is said to \textit{fill} a surface $S$ if the complement of their union consists of a disjoint union of topological disks and once-punctured disks. Equivalently, $\Gamma$ fills $S$ so long as every essential simple closed curve $\alpha$ has positive geometric intersection number with at least one curve in $\Gamma$. Let $i_{g,p}$ denote the minimum possible geometric intersection number for a filling pair $\alpha, \beta$ on $S_{g,p}$. A simple Euler characteristic argument shows that $i_{g,p}$ must grow linearly in $\omega(g,p)$. In \cite{aougab2015minimally} and \cite{AT1} the quantities $i_{g,p}$ were determined.

\begin{lemma}[Minimally intersecting filling pairs] \label{FillPair} Minimally intersecting filling pairs intersect as follows:

\begin{enumerate}
\item If $g \neq 2,0$ and $p=0$, $i_{g,p}=2g-1$. 
\item If $g \neq 2,0$ and $p \geq 1$, $i_{g,p}= 2g+p-2$. 
\item If $g=0$ and $p \geq 6$ is even, then $i_{g,p}= p-2$. On the other hand if $p$ is odd, $i_{g,p}= p-1$. 
\item If $g=2$ and $p\leq 2$, $i_{g,p}= 4.$
\item If $g=2$ and $p \geq 2$ is even, then $i_{g,p}=2g+p-2$; and if $p \geq 3$ is odd, $i_{g,p} \leq 2g+p-1$. 
\end{enumerate}
\end{lemma}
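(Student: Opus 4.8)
The plan is to establish the lower bounds uniformly through an Euler characteristic count and then to realize them — or the nearby value forced by a parity constraint — via explicit constructions. Given a filling pair $\alpha,\beta$ in minimal position, their union is a $4$-valent graph $G\subset S$ with $V=i(\alpha,\beta)$ vertices and, since each curve is cut into $i(\alpha,\beta)$ arcs, $E=2\,i(\alpha,\beta)$ edges. Because the pair fills, every complementary region of $G$ is an open disk or a once-punctured disk, and as each puncture lies in exactly one region there are precisely $p$ once-punctured faces. Writing $d$ for the number of honest disk faces and using that disks contribute $1$ and once-punctured disks contribute $0$ to the Euler characteristic, one computes $\chi(S)=V-E+d=d-i(\alpha,\beta)$, so that $d=i(\alpha,\beta)+2-2g-p$. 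Requiring $d\ge 0$ when $p\ge1$, and $d\ge1$ when $p=0$ (the complement is nonempty), yields $i(\alpha,\beta)\ge 2g+p-2$ and $i(\alpha,\beta)\ge 2g-1$ respectively, which already accounts for the generic cases (1) and (2).

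The subtlety, and the main obstacle, is that this Euler characteristic bound is not always sharp: in the exceptional ranges the true minimum exceeds the threshold by one, and identifying exactly when is the heart of the matter. A useful intermediate observation is that each honest disk face of $G$ is bounded by an \emph{even} number of sides, since traversing its boundary one alternates between arcs of $\alpha$ and arcs of $\beta$; minimal position rules out bigons, so every disk is at least a $4$-gon. The cleanest instance of the obstruction is genus $0$: on the sphere any two simple closed curves meet in an even number of points, so $i(\alpha,\beta)$ must be even, which pushes the bound $i\ge p-2$ up to the nearest even integer and explains the split between even $p$ (giving $p-2$) and odd $p$ (giving $p-1$) in case (3). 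The genus $2$ anomalies in (4) and (5) are not homological in nature and require a separate, more delicate combinatorial analysis of the admissible edge-gluings of the complementary polygon, showing that the naive single-disk configuration with $i=2g-1$ simply cannot be glued up to a genus $2$ surface with $\alpha$ and $\beta$ each connected.

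For the upper bounds I would exhibit explicit filling pairs realizing each listed value, the model being the closed generic case. To achieve $i=2g-1$ one builds a single $(4g-2)$-gon and prescribes an edge-pairing that simultaneously (i) identifies the $4g-2$ vertices into a $4$-valent pattern realizing $2g-1$ crossings, (ii) recovers a genus $g$ surface by an Euler characteristic check on the quotient, and (iii) leaves $\alpha$ and $\beta$ each connected with no complementary bigon, so that the pair is automatically filling and in minimal position. The punctured cases are obtained by puncturing prescribed faces, and the genus $0$ and genus $2$ cases by adapting the gluing to respect the parity obstruction identified above. In each case the verification reduces to checking the face count, the valence, and the absence of bigons, after which the stated equalities (and the single inequality in (5)) follow by comparing the construction's intersection number against the lower bound. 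The genuinely hard step throughout, as flagged, is the matching of lower bound and construction in the borderline families, where the sharp constant is dictated by parity and by low-complexity rigidity rather than by the Euler characteristic alone.
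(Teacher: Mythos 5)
You should first be aware that the paper does not prove this lemma at all: it is imported wholesale from the cited works \cite{aougab2015minimally} and \cite{AT1} (``In \cite{aougab2015minimally} and \cite{AT1} the quantities $i_{g,p}$ were determined''), so there is no in-paper argument to compare against, and your proposal has to be judged as a reconstruction of those papers' results. On that footing, you get the easy half right and leave the hard half as a plan. The Euler characteristic count is correct: with $V = i$, $E = 2i$ and $d$ disk faces, $\chi(S) = d - i$ gives $d = i + 2 - 2g - p$, hence $i \ge 2g-1$ when $p=0$ and $i \ge 2g+p-2$ when $p \ge 1$. The genus-zero parity observation is also correct: every simple closed curve in a planar surface separates, so any two transverse curves cross an even number of times, which pushes the bound to $p-1$ for odd $p$.

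Everything past that point, however, is a statement of intent rather than a proof, and it is precisely where the content of the cited papers lies. First, the upper bounds require exhibiting, for every $g$, an edge-pairing of a polygon whose quotient is $S_g$, whose two edge-classes close up into two \emph{connected} curves, and whose gluing creates no bigon; saying one ``prescribes an edge-pairing that simultaneously (i)--(iii)'' names this problem but does not solve it --- this existence statement (handled in \cite{aougab2015minimally} via filling permutations and an inductive construction) is the heart of the theorem. Your polygon also has the wrong size: with $i = 2g-1$ one has $V = 2g-1$, $E = 4g-2$ and a single face, and since each edge appears twice on the face boundary, that face is an $(8g-4)$-gon, not a $(4g-2)$-gon. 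Second, the genus-$2$ exception --- that $i_{2,0}=4$ rather than the Euler-characteristic value $3$ --- is flagged as requiring ``a separate, more delicate combinatorial analysis,'' but that analysis is never carried out; it is a genuine (finite but nontrivial) case analysis of gluings of the $12$-gon, and without it case (4) is unproven. Third, the punctured cases do not follow merely by ``puncturing prescribed faces'' of the minimal closed example: to realize $i_{g,p} = 2g+p-2$ you need a filling pair on $S_{g,0}$ with intersection number $2g+p-2$ and at least $p$ complementary disks, which is a different construction for each $p$, not a corollary of the $p=0$ case. So your proposal correctly identifies the strategy of the source papers, but as written it establishes only the lower bounds in cases (1)--(3) and none of the matching constructions.
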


In our application to pseudo-Anosov mapping classes in the Johnson filtration, we also require information about filling pairs of separating curves. Let $i^{sep}_{g,p}$ denote the minimum geometric intersection number taken over all filling pairs $\alpha, \beta$ where both $\alpha, \beta$ are separating curves. Then we have the following:  

\begin{lemma}[Separating filling pairs] \label{SepPair} There exists a constant $C \geq 0$ such that if  $g \geq 2$ and $p=0$ or $1$, there is a filling pair $(\alpha, \beta)$ on $S_{g,p}$ with both $\alpha, \beta$ separating curves, satisfying $i(\alpha, \beta) \leq C \cdot \omega(g,p)$.
\end{lemma}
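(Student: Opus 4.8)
The plan is to bootstrap from the minimally intersecting filling pairs produced in Lemma~\ref{FillPair}. Fix a filling pair $(\alpha_0,\beta_0)$ on $S_{g,p}$ with $i(\alpha_0,\beta_0)\le c\,\omega(g,p)$ for a uniform $c$; by Lemma~\ref{FillPair} such a pair exists, and (replacing an already-separating curve by itself) we may assume each of $\alpha_0,\beta_0$ is nonseparating. The key observation is that for any essential simple closed curve $\gamma$ and any curve $\gamma^{*}$ with $i(\gamma,\gamma^{*})=1$, a regular neighborhood $N(\gamma\cup\gamma^{*})$ is a one-holed torus, so its boundary $\partial N(\gamma\cup\gamma^{*})$ is a single \emph{separating} simple closed curve (it cuts off a genus-one subsurface). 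I therefore choose dual curves $a^{*},b^{*}$ with $i(\alpha_0,a^{*})=i(\beta_0,b^{*})=1$ and set
\[
\alpha \;=\; \partial N(\alpha_0\cup a^{*}), \qquad \beta \;=\; \partial N(\beta_0\cup b^{*}).
\]
Both $\alpha$ and $\beta$ are separating by construction, and since $g\ge 2$ the complementary genus-$(g-1)$ piece is nondegenerate, so each is essential and non-peripheral; in the case $p=1$ we arrange the puncture to lie in the complement of $\alpha\cup\beta$.

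For the intersection bound, note that $\partial N(\alpha_0\cup a^{*})$ runs in two parallel strands along $\alpha_0$ and two along $a^{*}$, away from a bounded neighborhood of the single intersection point $\alpha_0\cap a^{*}$. Hence
\[
i(\alpha,\beta_0)\;\le\;2\,i(\alpha_0,\beta_0)+2\,i(a^{*},\beta_0)+O(1),
\]
and a symmetric estimate after replacing $\beta_0$ by $\beta$ yields
\[
i(\alpha,\beta)\;\le\;4\,i(\alpha_0,\beta_0)+O(1).
\]
The $O(1)$ terms, and in particular $i(a^{*},\beta_0)$, $i(b^{*},\alpha_0)$ and $i(a^{*},b^{*})$, are controlled because we may choose each dual curve to cross a single handle of the explicit models of Lemma~\ref{FillPair}, so that it meets the partner curve a uniformly bounded number of times. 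Combining this with $i(\alpha_0,\beta_0)=O(\omega)$ gives $i(\alpha,\beta)\le C\,\omega(g,p)$, as required.

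The main obstacle is verifying that $\alpha\cup\beta$ still \emph{fills} $S$. My plan is to show that every complementary region of $\alpha\cup\beta$ is a disk (together with a single once-punctured disk when $p=1$). Since $\alpha$ is built to run doubly parallel to $\alpha_0\cup a^{*}$ and $\beta$ doubly parallel to $\beta_0\cup b^{*}$, the complementary regions of $\alpha\cup\beta$ are, away from the two handle regions, refinements of those of $\alpha_0\cup\beta_0$: each disk of $S\setminus(\alpha_0\cup\beta_0)$ is subdivided, and the new regions created between pairs of parallel strands are thin rectangles and bigons, all disks. The two handle regions $N(\alpha_0\cup a^{*})$ and $N(\beta_0\cup b^{*})$ must be checked to be cut into disks by the opposite curve, which reduces to a local, bounded-complexity computation. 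Equivalently, I would argue that any essential curve disjoint from $\alpha\cup\beta$ can be isotoped off the two handle regions and then off the parallel strands to produce an essential curve disjoint from $\alpha_0\cup\beta_0$, contradicting that $(\alpha_0,\beta_0)$ fills. This filling verification---ensuring no annular or higher-genus complementary region survives the doubling and handle additions---is the crux of the argument; the separating property and the linear intersection bound are then immediate from the construction.
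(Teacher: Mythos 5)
Your construction handles the separating property and the intersection bound cheaply, but the filling step---which you correctly flag as the crux---is not merely unfinished: as stated, the construction can genuinely fail, so no argument of the kind you sketch can close the gap without additional hypotheses on the dual curves. The trouble is the curves living \emph{inside} the handle regions. The one-holed torus $T_\alpha = N(\alpha_0 \cup a^*)$ has boundary $\alpha$, so every essential curve contained in $T_\alpha$ (for instance $\alpha_0$, $a^*$, and all of their slope combinations) is disjoint from $\alpha$, and none of them can be ``isotoped off the handle region'': any such isotopy would contradict $i(\alpha_0,a^*)=1$. Your proposed reduction to a curve disjoint from $\alpha_0\cup\beta_0$ therefore breaks exactly there ($a^*$ meets $\alpha_0$, so no curve disjoint from $\alpha_0\cup\beta_0$ is produced from it). Concretely, nothing in your setup forbids choosing $b^*$ disjoint from $\alpha_0\cup a^*$; if it is, then $b^*$ lies in the interior of $T_\beta=N(\beta_0\cup b^*)$, hence is disjoint from $\beta$, and it can be isotoped off $T_\alpha$, hence is disjoint from $\alpha$---so $(\alpha,\beta)$ does not fill. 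Thus filling holds only for suitably chosen duals, and the required statement (no essential curve is isotopic into both a component of $S\setminus\alpha$ and a component of $S\setminus\beta$) does not follow from $(\alpha_0,\beta_0)$ filling; it is exactly what must be proved. Relatedly, your claim that the complementary regions of $\alpha\cup\beta$ ``refine'' those of $\alpha_0\cup\beta_0$ is false: $\alpha_0$ and $\beta_0$ are not contained in $\alpha\cup\beta$, so regions of $S\setminus(\alpha\cup\beta)$ merge across $\alpha_0$, $a^*$, $\beta_0$, $b^*$ and can a priori contain annuli or higher-genus pieces.

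For comparison, the paper sidesteps all of this with an inductive construction: seed pairs of separating filling curves on $S_2$ and $S_3$, and an inductive step that excises a disk around an intersection point of $(\alpha_g,\beta_g)$ and glues in a copy of $S_{2,1}$ carrying two separating arcs $\rho,\gamma$ such that every essential arc of $S_{2,1}$ crosses $\rho\cup\gamma$. Filling of the new pair then follows from a clean dichotomy (a putative disjoint curve either lives in the old $S_g$, where it meets $\tilde\alpha_g\cup\tilde\beta_g$, or has an essential arc in the new $S_{2,1}$, where it meets $\rho\cup\gamma$), and the intersection number grows by the additive constant $i(\rho,\gamma)$ per genus-two step, giving the linear bound. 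If you want to salvage your approach, you would need to formulate explicit conditions on $a^*$ and $b^*$ ruling out every essential curve isotopic into one of $T_\alpha$, $S\setminus T_\alpha$ and simultaneously into one of $T_\beta$, $S\setminus T_\beta$; verifying such conditions is the real content of the lemma.
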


\begin{proof} First suppose $p=0$,  and define $\alpha_{2}, \beta_{2}$ to be any pair of separating curves which fill $S_{2}$; similarly let $\alpha_{3}, \beta_{3}$ be any pair of separating curves filling $S_{3}$. These will be the seeds of an inductive construction. 

Now let $\rho, \gamma$ be a pair of simple separating arcs on $S_{2,1}$ (which we interpret as the genus $2$ surface with one boundary component, as opposed to one puncture) having the property that any essential arc in $S_{2,1}$ intersects either $\rho$ or $\gamma$.  
Then given $\alpha_{g}, \beta_{g}$, we form $\alpha_{g+2}, \beta_{g+2}$ as follows: excise a small open disk centered at one of the points in $\alpha_{g} \cap \beta_{g}$. After excising, $\alpha_{g}$ and $\beta_{g}$ have become arcs which we denote by $\tilde{\alpha}_{g}, \tilde{\beta}_{g}$. We then glue on a copy of $S_{2,1}$, matching the endpoints of $\tilde{\alpha}_{g}$ to those of $\gamma$, and similarly matching the endpoints of $\tilde{\beta}_{g}$ to those of $\rho$. We obtain a pair of simple closed curves $\alpha_{g+2}, \beta_{g+2}$ on $S_{g+2}$, and we claim that these curves are both separating and that they fill. 

Note first that $\alpha_{g+2}, \beta_{g+2}$ are in minimal position since no complementary region is a bigon, and therefore it suffices to prove that if $\kappa$ is any essential simple closed curve on $S_{g+2}$, $\kappa$ is not disjoint from $\alpha_{g+2} \cup \beta_{g+2}$. If $\kappa$ can be isotoped into the original copy of $S_{g}$, it must intersect either $\tilde{\alpha}_{g}$ or $\tilde{\beta}_{g}$ since $\alpha_{g}, \beta_{g}$ fill on $S_{g}$. Therefore, we can assume that $\kappa$ projects non-trivially to the copy of $S_{2,1}$; that is, that $\kappa$ intersects this copy of $S_{2,1}$ in at least one arc which is not boundary parallel. This arc must intersect either $\rho$ or $\gamma$ since any arc does so by construction. 
Therefore $\kappa$ intersects $\alpha_{g+2} \cup \beta_{g+2}$ and we conclude that the new pair fills $S_{g+2}$. 

That $\alpha_{g+2}, \beta_{g+2}$ are both separating is immediate since both are obtained by concatenating a pair of separating arcs in disjoint subsurfaces. Finally, $i(\alpha_{g+2}, \beta_{g+2}) \leq i(\alpha_{g}, \beta_{g})+ i(\gamma, \rho)$.

If $p =1$, then by puncturing one complementary region of $S_{g,0} \setminus (\alpha_{g} \cup \beta_{g})$,  $(\alpha_{g}, \beta_{g})$ is a filling pair on $S_{g,1}$ with the desired properties. 
\end{proof}

\subsection{Annular projections and the bounded geodesic image theorem}
For an annulus $Y \subset S$ whose core curve $\alpha$ is essential, let $\tilde{Y}$ be the cover of $S$ associated to the conjugacy class of the cyclic subgroup of $\pi_1(S)$ represented by $\alpha$. Let $\overline{Y}$ be the compactification of $\tilde{Y}$ obtained by choosing a hyperbolic metric on $S$ and lifting it to $\tilde Y$. The curve graph $\C(Y)$ of the annulus $Y$ is the graph whose vertices are homotopy classes of properly embedded, simple arcs of $\overline{Y}$ whose endpoints lie on distinct boundary components. Two vertices $x$ and $y$ of $\C(Y)$ are joined by an edge of $\C(Y)$ if and only if $x$ and $y$ can be represented by arcs in $\overline Y$ with disjoint interiors. There is a projection $\pi_Y$ from the vertices of the curve graph of $S$ to arcs of $\C(Y)$, known as \emph{subsurface projection}. Given $\beta \in \C^0(S)$ realize $\alpha$ and $\beta$ with minimal intersection in $S$. If $\beta$ is disjoint from $\alpha$ then define $\pi_Y(\beta) = \emptyset$. Otherwise, the preimage of $\beta$ in the cover $\tilde{Y}$ contains simple, properly embedded arcs with well-defined endpoints on distinct components of $\partial \overline{Y}$ and we define $\pi_Y (\beta) \subset \C^0(Y)$ to be this collection of arcs in $\overline{Y}$.

If $\alpha \in \C^0(S)$ is a curve which is the core of an annulus $Y$ then we also use the notation $\C(\alpha)$ for the curve complex $\C(Y)$ and we denote its path metric by $d_{\alpha}$. Let $\pi_{\alpha}: \C(S) \setminus N_1(\alpha) \to \C(\alpha)$ be the corresponding subsurface projection, where $N_1(\cdot)$ is the closed $1$-neighborhood in $\C_1(S)$. Also, write $d_\alpha(\beta,\gamma)$ for $\diam_{\C(\alpha)} (\pi_\alpha(\beta) \bigcup \pi_\alpha(\gamma))$.

From \cite{MasurMinsky}, we recall the following:

\begin{lemma}[Masur--Minsky] \label{lem:facts}
Let $S$ be a surface with $\omega(S) > 1$. For $\alpha \in \C^0(S)$ and any path $\gamma=  \gamma_0, \gamma_1, \ldots, \gamma_n$ of curves in $\C(S)$ each intersecting $\alpha$ essentially, we have:
\begin{enumerate}
\item $\diam_{\C(\alpha)} \pi_\alpha(\gamma) \le 1$
\item $d_\alpha(\gamma_0,\gamma_n) \le n+1$ 
\item If $T_\alpha$ is the Dehn twist about $\alpha$, then $d_\alpha(\gamma, T^N(\gamma)) \ge N-2$.
\end{enumerate}
\end{lemma}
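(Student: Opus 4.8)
The plan is to reduce all three statements to elementary facts about the arcs of the compactified annular cover $\overline{Y}$, exploiting that $\C(\alpha)$ is coarsely a copy of $\Z$ in which distances are governed by intersection numbers of arcs. First I would record the observation behind (1): since each $\gamma_i$ is a simple closed curve on $S$, its full preimage in $\tilde{Y}$ is an embedded $1$-manifold, so any two arcs occurring in $\pi_\alpha(\gamma_i)$ are disjoint. By the definition of the annular curve graph, disjoint arcs of $\overline{Y}$ are joined by an edge, so $\diam_{\C(\alpha)} \pi_\alpha(\gamma_i) \le 1$ for each individual curve crossing $\alpha$; applied to each $\gamma_i$ this is exactly (1).

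For (2) the key point is that whenever $\gamma_i$ and $\gamma_{i+1}$ are disjoint on $S$ and both cross $\alpha$, their full preimages in $\tilde{Y}$ are disjoint, so the entire collection $\pi_\alpha(\gamma_i)\cup\pi_\alpha(\gamma_{i+1})$ consists of pairwise disjoint arcs and hence has diameter at most $1$; in particular $d_\alpha(\gamma_i,\gamma_{i+1})\le 1$. Since every $\gamma_j$ crosses $\alpha$, each $\pi_\alpha(\gamma_j)$ is nonempty, so I may select an arc $a_j\in\pi_\alpha(\gamma_j)$ for each $j$. Given arbitrary $x\in\pi_\alpha(\gamma_0)$ and $y\in\pi_\alpha(\gamma_n)$, setting $a_0=x$ and $a_n=y$ and telescoping the consecutive bounds via the triangle inequality gives $d(x,y)\le\sum_{i=0}^{n-1} d(a_i,a_{i+1})\le n$, while any two arcs lying in the same endpoint projection are within distance $1$ by (1). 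Taking the maximum over $x,y$ yields $d_\alpha(\gamma_0,\gamma_n)\le n+1$.

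The main work, and the step I expect to be the main obstacle, is (3): controlling the action of $T_\alpha$ on $\C(\alpha)$. Here I would invoke the standard identification of $\C(\alpha)$ as a graph quasi-isometric to $\Z$, in which the distance between two arcs $a,b$ of $\overline{Y}$ with endpoints on distinct boundary components is $d_\alpha(a,b)=1+|\hat{\imath}(a,b)|$, where $\hat{\imath}$ denotes the signed intersection number of the interiors. The twist $T_\alpha$ lifts to the homeomorphism of $\overline{Y}$ fixing $\partial\overline{Y}$ pointwise and performing a full Dehn twist along the core, so $\pi_\alpha(T^N\gamma)$ is the image of $\pi_\alpha(\gamma)$ under this lifted $N$th power and $\hat{\imath}(a,T_\alpha^N a)$ changes by $N$ for any crossing arc $a\in\pi_\alpha(\gamma)$. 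Since both $a$ and $T_\alpha^N a$ lie in $\pi_\alpha(\gamma)\cup\pi_\alpha(T^N\gamma)$, the distance formula gives $d_\alpha(\gamma,T^N(\gamma))\ge d_\alpha(a,T_\alpha^N a)=1+|\hat{\imath}(a,T_\alpha^N a)|\ge N-2$. The delicate part is the precise intersection-number computation together with the orientation conventions for $\overline{Y}$, which is where the exact additive constant is pinned down; the ambiguity of at most $1$ in choosing representatives of each projection, controlled by (1), accounts for the remaining slack in the constant.
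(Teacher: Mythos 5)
Your arguments for items (1) and (2) are correct and are the standard ones: the full preimage of an embedded curve in the annular cover is an embedded $1$--manifold, and preimages of disjoint curves are disjoint, so each $\pi_\alpha(\gamma_i)$, and each union $\pi_\alpha(\gamma_i)\cup\pi_\alpha(\gamma_{i+1})$ over an edge of the path, consists of pairwise disjoint arcs and hence has diameter at most $1$; telescoping gives (2). Note for context that the paper itself offers no proof of this lemma --- it is quoted from Masur--Minsky, and item (3) is their twisting lemma \cite{MM2} --- so the comparison here is with the standard argument.

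Item (3), however, rests on a false claim, and it is exactly the step you flagged as delicate. There is no lift of $T_\alpha$ to $\overline{Y}$ fixing $\partial\overline{Y}$ pointwise. Since $\langle\alpha\rangle$ is its own normalizer in $\pi_1(S)$, the cover $\tilde{Y}\to S$ has trivial deck group, so $T_\alpha$ has a \emph{unique} lift $\tilde{T}$, and its boundary values are forced by equivariance: $\tilde{T}$ must carry the arcs of $\pi_\alpha(\beta)$ onto the arcs of $\pi_\alpha(T_\alpha\beta)$, and these two families have different endpoints on $\partial\overline{Y}$. Indeed, the endpoints of arcs of $\pi_\alpha(\beta)$ are fixed points at infinity of conjugates of $\beta$ in $\pi_1(S)$, while those of $\pi_\alpha(T_\alpha\beta)$ are fixed points of conjugates of an element representing $T_\alpha\beta$; distinct non-commuting hyperbolic elements of a discrete, torsion-free group have disjoint fixed-point sets, and $T_\alpha\beta$ is not isotopic to $\beta$ because $i(T_\alpha\beta,\beta)=i(\alpha,\beta)^2>0$. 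So $\tilde{T}$ moves the endpoint of every arc you project, the exact identity ``$\hat{\imath}(a,T_\alpha^N a)$ changes by $N$'' is unavailable as stated, and the additive constant $2$ in the lemma is not a matter of orientation conventions or of the diameter-$1$ ambiguity from (1): it exists precisely to absorb the failure of the boundary to be fixed.

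What is true is a local version of your picture. The preimage in $\tilde{Y}$ of a small annular neighborhood $A$ of $\alpha$ has a unique compact component $\hat{A}$ (the one containing the core), and there the unique lift of $T_\alpha^N$ really is a homeomorphism fixing $\partial\hat{A}$ pointwise and twisting $N$ times; the failure happens on the non-compact strip components, where the lift is a shear that moves points all the way out to $\partial\overline{Y}$. The standard repair is to measure winding of arcs through $\hat{A}$: any two arcs of $\pi_\alpha(\beta)$ are disjoint (your item (1)), so their windings through $\hat{A}$ differ by at most $1$, while applying the lift shifts winding through $\hat{A}$ by exactly $N$; combined with the distance formula $d_\alpha(x,y)=1+i(x,y)$ this yields (3), with the loss of $2$ coming from these bounded corrections. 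Your framework (the distance formula and intersection counting) is the right one, but without replacing the global boundary-fixing lift by such a local argument --- or by an analysis showing $\partial\tilde{T}$ fixes a Cantor set pointwise and preserves its complementary gaps --- the proof of (3) does not go through.
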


Finally, we recall the bounded geodesic image theorem of Masur--Minsky \cite{MM2}. The version we state here is due to Webb and gives a uniform, computable constant \cite{webb2013uniform}. It is stated below for arbitrary subsurfaces $Y \subset S$, but we will use it only for annuli.

\begin{theorem}[Bounded geodesic image theorem] \label{BGIT}
There exists $M \ge 0$ so that for any surface $S$ and any geodesic $g$ in $\C(S)$, if each vertex of $g$ has nontrivial projection to the subsurface $Y$ then $\mathrm{diam}(\pi_Y(g)) \le M$.
\end{theorem}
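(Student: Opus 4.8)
Since every application of Theorem \ref{BGIT} in this note is to annuli, I would first reduce to the case where $Y$ is an annulus with essential core curve $\alpha$; the general case runs along the identical scheme, with the annular complex $\C(\alpha)$ replaced by $\C(Y)$, the role of $\alpha$ played by $\partial Y$, and the Masur--Minsky Lipschitz estimate for the subsurface projection $\pi_Y$ replacing the annular estimates of Lemma \ref{lem:facts}. So let $g = v_0, v_1, \dots, v_n$ be a geodesic in $\C(S)$ every vertex of which has nontrivial projection to $Y$, i.e.\ intersects $\alpha$ essentially. Equivalently, $g$ is disjoint from $N_1(\alpha)$: no vertex is equal or adjacent to $\alpha$ in $\C(S)$. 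The goal is to bound $d_\alpha(v_0, v_n) = \diam_{\C(\alpha)} \pi_\alpha(g)$ by a constant $M$ independent of both $n$ and $S$.

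The first, easy ingredient is Lemma \ref{lem:facts}: part (1) gives $\diam\, \pi_\alpha(v_i) \le 1$ for each vertex, and part (2), applied to the path $g$ all of whose vertices cut $\alpha$, gives $d_\alpha(v_0, v_n) \le n+1$. This already shows the projection is finite, but the bound is linear in the length $n$ of $g$. The entire content of the theorem is that along a \emph{geodesic} the projection cannot accumulate, so that this estimate must be replaced by one independent of $n$; everything below is devoted to this.

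The mechanism that removes the dependence on $n$ is hyperbolicity of $\C(S)$ together with a contraction property near $\alpha$. Heuristically, a large value of $d_\alpha(v_0, v_n)$ records a large amount of twisting of $g$ about $\alpha$ --- by Lemma \ref{lem:facts}(3), producing $N$ units of annular distance is exactly what $N$ Dehn twists $T_\alpha^N$ accomplish --- and such twisting can only be generated by curves lying close to $\alpha$ in $\C(S)$. A geodesic that stays outside $N_1(\alpha)$ therefore has no room to twist. I would formalize this via the nearest-point projection $\rho : \C(S) \to N_1(\alpha)$ in the $\delta$-hyperbolic graph $\C(S)$: the key structural fact is that $\rho$ is strongly contracting, so that $\rho(g)$ has uniformly bounded diameter. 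The remaining and delicate task is to convert a bounded-diameter excursion of $g$ relative to $N_1(\alpha)$ into a bound on $d_\alpha(v_0, v_n)$, controlling uniformly how much annular projection any such bounded excursion can create; combining the two steps yields $d_\alpha(g) \le M$.

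The main obstacle is \emph{uniformity}. The original argument of Masur--Minsky \cite{MM2} produces a constant $M = M(S)$ that a priori degrades with the complexity of $S$, since both the hyperbolicity constant and the contraction constant were only known to be bounded on each fixed surface. Upgrading to a uniform, indeed computable, $M$ as stated is precisely the refinement of Webb \cite{webb2013uniform}. Two inputs make this possible: first, that curve graphs are $\delta$-hyperbolic with $\delta$ independent of $S$ (Hensel--Przytycki--Webb, and others), which makes the contraction constant above uniform; and second, that the passage from a near-$\alpha$ excursion to annular distance can be executed by an explicit surgery argument on arcs (via unicorn/dominant arcs), which both avoids any surface-dependent constant and yields an effective bound. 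I expect the heart of the matter to be exactly this last step: bounding, uniformly in $S$, the annular projection generated by a bounded-length portion of $g$ near $\alpha$, so that geodesicity can be leveraged into a constant independent of $n$ and of $\omega(S)$.
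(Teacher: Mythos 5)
The first thing to say is that the paper contains no proof of Theorem \ref{BGIT} for you to match: it is quoted from Masur--Minsky \cite{MM2}, in the uniform, computable-constant form due to Webb \cite{webb2013uniform}, and is used as a black box. So your attempt must be measured against those cited proofs, and it does not reach them. What you actually establish is only the easy part: Lemma \ref{lem:facts}(1) bounds $\diam_{\C(\alpha)}\pi_\alpha(v_i)$ for each vertex, and Lemma \ref{lem:facts}(2) gives $d_\alpha(v_0,v_n)\le n+1$. The entire content of the theorem --- replacing $n+1$ by an absolute constant using geodesicity --- is announced but never proved; you say so yourself (``I expect the heart of the matter to be exactly this last step''). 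A proof that defers its core step is a gap, not a proof.

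Moreover, the mechanism you propose for that core step would not work as formulated. Your ``strong contraction'' ingredient is vacuous: $N_1(\alpha)$ is the closed $1$-ball about a vertex of $\C(S)$, so any subset of it, in particular the nearest-point projection $\rho(g)$, has diameter at most $2$ in $\C(S)$ --- this carries no information whatsoever. The deeper problem is that \emph{no} amount of coarse $\C(S)$-metric control can bound the annular quantity $d_\alpha$: by Lemma \ref{lem:facts}(3), $d_\alpha(\gamma, T_\alpha^N\gamma)\ge N-2$ while $d_S(\gamma, T_\alpha^N\gamma)$ stays uniformly bounded (both curves lie at the same distance from $\alpha$), so a ``bounded excursion of $g$ relative to $N_1(\alpha)$'' is perfectly compatible with arbitrarily large annular projection. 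Ruling this out is exactly what requires input finer than hyperbolicity-plus-contraction --- Masur--Minsky's contraction argument for the projection maps $\pi_Y$ themselves, or Webb's surgery argument on arcs --- and none of that appears in your text. Finally, a factual correction on attribution: Webb's proof is notable precisely because it \emph{avoids} hyperbolicity of $\C(S)$ and produces an explicit constant independent of any $\delta$; your proposed route (uniform hyperbolicity of curve graphs feeding into the Masur--Minsky scheme) is a conceivable alternative way to get uniformity, but it misdescribes the cited source and, in any case, you do not carry it out either.
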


\subsection{Mangahas' Lemma}

Let $C_A = N_1(\alpha)$ and $C_B = N_1(\beta)$ be $1$-neighborhoods of the curves $\alpha$ and $\beta$ in $\C(S)$ and let $M$ be as in the bounded geodesic image theorem (Theorem \ref{BGIT}). The following is a special case of a combination of Lemma $5.3$ of \cite{mangahas2013recipe} along with the claim used in its proof. Recall that for a word $w$ in the free group $F(a,b)$, the \emph{syllable length} of $w$, denote $|w|_s$, is the number of powers of $a$ or $b$ that occur in the reduced form for $w$.

\begin{lemma}[Mangahas] \label{lem:mang}
Let $a,b$ be powers of Dehn twists about curves $\alpha,\beta$, respectively, such that $d_S(\alpha,\beta)\ge 3$, i.e. $\alpha$ and $\beta$ fill $S$. Suppose that for all $k\ne 0$
\[
d_\alpha(C_B,a^k \cdot C_B)>2M+4 \quad \mathrm{and} \quad d_\beta(C_A, b^k \cdot C_A) > 2M+4.
\]
Then for any word $w$ in $\langle a, b \rangle$, either
\[
d_S(w \cdot \alpha,\alpha) \ge |w|_s \quad \mathrm{or} \quad d_S(w \cdot \beta,\beta) \ge |w|_s.
\]
\end{lemma}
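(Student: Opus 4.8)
The plan is to expand $w$ in its reduced syllable form $w=s_1 s_2\cdots s_n$, where each $s_j$ is a nonzero power of $a$ or $b$, the two kinds alternate, and $n=|w|_s$. Write $\gamma_j\in\{\alpha,\beta\}$ for the core of the twist underlying $s_j$ and $u_j=s_1\cdots s_j$ (with $u_0=1$). The hypotheses on $d_\alpha(C_B,a^kC_B)$ and $d_\beta(C_A,b^kC_A)$ are precisely a ping-pong (Schottky-type) condition on subsurface projections, so I would run a ping-pong argument through the projections. I will track a single curve $x\in\{\alpha,\beta\}$, chosen via the ``or'' in the conclusion so that the first-applied (rightmost) syllable $s_n$ does not fix $x$, and I will bound the length of a geodesic in $\C(S)$ from $x$ to $w\cdot x$ from below. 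The $n$ witnesses are the translated annuli with cores $\delta_j:=u_{j-1}\cdot\gamma_j$, one per syllable; the goal is that any geodesic $[x,wx]$ passes within distance $1$ of each $\delta_j$ and that these $n$ passages are distinct.

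The heart of the matter is the estimate $d_{\delta_j}(x,wx)>M$ for every $j$, where $M$ is the constant of the bounded geodesic image theorem (Theorem \ref{BGIT}). By equivariance of subsurface projection under $\Mod(S)$, applying $u_{j-1}^{-1}$ gives
\[ d_{\delta_j}(x,wx)\;=\;d_{\gamma_j}\!\left(u_{j-1}^{-1}\,x,\;s_j\,(s_{j+1}\cdots s_n)\,x\right). \]
Since $s_{j\pm1}$ are powers of the twist about the curve \emph{other} than $\gamma_j$, both $u_{j-1}^{-1}x$ and $(s_{j+1}\cdots s_n)x$ project in $\C(\gamma_j)$ to within a bounded distance of the neighborhood of the opposite curve, namely $C_B$ when $\gamma_j=\alpha$ and $C_A$ when $\gamma_j=\beta$. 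The single twist $s_j$ then separates these two projections by the hypothesized amount: $d_\alpha(C_B,a^kC_B)>2M+4$ when $\gamma_j=\alpha$, and $d_\beta(C_A,b^kC_A)>2M+4$ when $\gamma_j=\beta$. The constant $2M+4$ is exactly what is needed to absorb the localization error together with the $1$-neighborhood slack built into $C_A=N_1(\alpha)$ and $C_B=N_1(\beta)$, leaving $d_{\delta_j}(x,wx)>M$.

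With this in hand I convert projections into distance. By the contrapositive of Theorem \ref{BGIT}, $d_{\delta_j}(x,wx)>M$ forces any geodesic $[x,wx]$ to contain a vertex $v_j$ with $i(v_j,\delta_j)=0$, i.e.\ $d_S(v_j,\delta_j)\le 1$. Consecutive cores fill: $d_S(\delta_j,\delta_{j+1})=d_S(\gamma_j,\,s_j\gamma_{j+1})\ge 3$, because $s_j$ fixes $\gamma_j$, so $(\gamma_j,s_j\gamma_{j+1})=s_j\cdot(\gamma_j,\gamma_{j+1})$ is the image of the filling pair $(\gamma_j,\gamma_{j+1})$ (recall $d_S(\alpha,\beta)\ge 3$) under a homeomorphism, hence again fills. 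Thus $v_j\ne v_{j+1}$. Finally, because consecutive $\delta_j,\delta_{j+1}$ overlap, the passages $v_1,\dots,v_n$ occur in a definite order along the geodesic; together with $v_j\ne v_{j+1}$ this makes them pairwise distinct, so the geodesic carries at least $n=|w|_s$ vertices and $d_S(x,wx)\ge |w|_s$, as desired.

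The step I expect to be the main obstacle is the localization inside the second paragraph: one must control the interference of \emph{all} the non-adjacent syllables so that, after stripping $u_{j-1}^{-1}$, both endpoints genuinely cluster near the opposite curve in $\C(\gamma_j)$ and the $j$-th twist alone accounts for the large value of $d_{\delta_j}(x,wx)$. This is the content of the ``claim'' accompanying Lemma $5.3$ in \cite{mangahas2013recipe}; it is carried out by iterating Behrstock's inequality across the syllables (a large twist about one curve collapses projections to the other to a bounded set), and it is exactly where the hypotheses are needed for \emph{all} $k\ne0$ rather than for a single power, with the accumulated error dominated by the margin $2M+4$. A secondary, more routine subtlety is the endpoint bookkeeping when the outermost syllable shares its core with $x$, which is handled by working with the $1$-neighborhoods $C_A,C_B$—one reason the statement is phrased in terms of these rather than the curves themselves—together with the ordering/distinctness count in the last step.
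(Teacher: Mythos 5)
The paper does not actually prove this lemma: it is imported as a special case of Lemma 5.3 of \cite{mangahas2013recipe} together with the ``claim'' used in its proof, so there is no internal argument to compare against. Your proposal is, in outline, a faithful reconstruction of that cited argument: the translated annuli $\delta_j = u_{j-1}\cdot\gamma_j$, the projection estimates $d_{\delta_j}(x,wx)>M$, the contrapositive of the bounded geodesic image theorem forcing a vertex $v_j$ within distance $1$ of each $\delta_j$, and the ordering/counting of these passages are exactly the ingredients of Mangahas' proof. In that sense you have taken the same route as the paper --- indeed, for the crux you defer to the very same claim the authors cite.

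Two points keep this from being a complete standalone proof. First, the localization step $d_{\delta_j}(x,wx)>M$, which you correctly identify as the heart, is described but not carried out; as written your argument is an outline plus a citation, which matches the paper's level of rigor but does not exceed it. Second, and more substantively, your justification of the ordering of $v_1,\dots,v_n$ is not valid as stated: the fact that consecutive annuli $\delta_j,\delta_{j+1}$ have filling cores gives $v_j\ne v_{j+1}$ (any vertex within distance $1$ of both would force $d_S(\delta_j,\delta_{j+1})\le 2$), but overlap alone is a symmetric condition and cannot order points along a geodesic. The time-ordering of the $\delta_j$ is itself a consequence of the large-projection hypotheses via Behrstock's inequality (subsurfaces on which the endpoints project far apart are totally ordered, and a geodesic meets their neighborhoods in that order), so this step needs the same machinery as the localization rather than the filling of consecutive cores. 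Relatedly, the degenerate case you flag but do not resolve is real: when the word begins with an $a$--syllable and ends with a $b$--syllable, any choice of $x\in\{\alpha,\beta\}$ coincides with the core of one outermost syllable, so one extreme annulus admits no projection of $x$ and must be discarded; the bound $d_S(x,wx)\ge|w|_s$ then survives only because \emph{both} endpoints lie at distance $\ge 3$ from the first usable and the last annulus, giving back the two lost increments. These issues are fixable with the tools you name, but they are precisely where the content of Mangahas' claim lies.
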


\noindent We remark that if $a= T_\alpha^{l_1}$ and $b=T_\beta^{l_2}$ then by Lemma \ref{lem:facts} the hypotheses of Lemma \ref{lem:mang} are satisfied so long as $|l_1|,|l_2| \ge 2M+7 $.

\subsection{Pseudo-Anosovs and Teichm{\"u}ller disks} \label{sec:teich}
For curves $\alpha$ and $\beta$ which jointly fill the surface $S$ and have intersection number $i(\alpha,\beta) =n$, there is a representation $\Psi: \langle T_\alpha, T_\beta \rangle \to PSL_2(\mathbb{R})$ given by
\begin{align} \label{rep}
\Psi(T_\alpha) = \begin{pmatrix} 1 & n  \\ 0 & 1 \end{pmatrix}, \quad  \Psi(T_\beta) = \begin{pmatrix} 1 & 0  \\ -n & 1 \end{pmatrix}.
\end{align}

Thurston showed that the  pseudo-Anosov mapping classes of the subgroup $ \langle T_\alpha, T_\beta \rangle \le \Mod(S)$ are exactly the ones mapping to hyperbolic matrices in $PSL_2(\mathbb R)$ (i.e. matrices with $2$ distinct eigenvalues). Further, he showed that the dilatation of such a pseudo-Anosov is equal to the largest eigenvalue of its representative matrix \cite{thurston1988geometry}. Since the Teichm\"uller space translation length of a pseudo-Anosov mapping class $\phi$ is equal to the logarithm of its dilatation, this allows a direct computation of $\ell_\T(\phi)$ for $\phi \in \langle T_\alpha, T_\beta \rangle$. For proofs of these facts see \cite{farb2011primer}.

The representations $\Psi: \langle T_\alpha, T_\beta \rangle \to PSL_2(\mathbb{R})$ in (\ref{rep}) comes from the singular flat structure $S(\alpha,\beta)$ on $S$ associated to the pair $(\alpha, \beta)$. This is the structure induced by the quadratic differential $q$ on $S$ whose vertical foliation is equal to $\alpha$ and whose horizontal foliation is equal to $\beta$ as measured foliations.  
Alternatively, one can consider the dual square complex to the graph $\alpha \cup \beta$, which induces a complex structure on $S$ along with the quadratic differential $q$ obtained by taking $dz^2$ in the interior of each square. The Dehn twists $T_\alpha$ and $T_\beta$ can each be realized by an affine map with respect to the singular flat structure, and the ``derivative'' map induces the representation to $PSL_2 (\mathbb{R})$. See \cite{farb2011primer} for details.

The image in $\T(S)$ of the  $SL_2(\mathbb R)$ orbit of a quadric differential $q$ is known as a \emph{Teichm\"uller disk}. Given a filling pair $\alpha$, $\beta$, by $\mathbb{D}(\alpha, \beta)$ we mean the Teichm\"uller disk corresponding to the quadratic differential described above, determined by the dual square complex to $\alpha \cup \beta$. The subgroup of $\Mod(S)$ preserving $\mathbb{D}(\alpha,\beta)$ is known as the Veech group $V(\alpha,\beta)$ and equals the image in $\Mod(S)$ of the affine homeomorphisms of the singular flat surface $S(\alpha,\beta)$. In particular, we note that $\langle T_\alpha, T_\beta \rangle \le V(\alpha,\beta)$.

\section{Ratio optimizers via QI trees}
Let $S = S_{g,p}$. Choose simple closed curves $\alpha ,\beta \in \C^0(S)$ which fill $S$, that is, for which $d_S(\alpha,\beta) \ge 3$. For notational convenience, set $i =i_{\alpha,\beta} = i(\alpha,\beta)$. We will use later that $\alpha$ and $\beta$ can be chosen so that $i(\alpha,\beta) \le i_{g,p}$ where $i_{g,p}$ is as in Section \ref{sec:curves} and depends linearly on the complexity $\omega(S) = \omega(g,p)$ of $S$.
Let $M$ be the bounded geodesic image constant of Theorem \ref{BGIT}. Recall that $M$ is independent of the complexity of $S$. Let $B = 2M+7$ and set $a = T^B_\alpha$ and $b=T^B_\beta$.

\begin{figure}[h]
\begin{center}
\includegraphics[scale= .5]{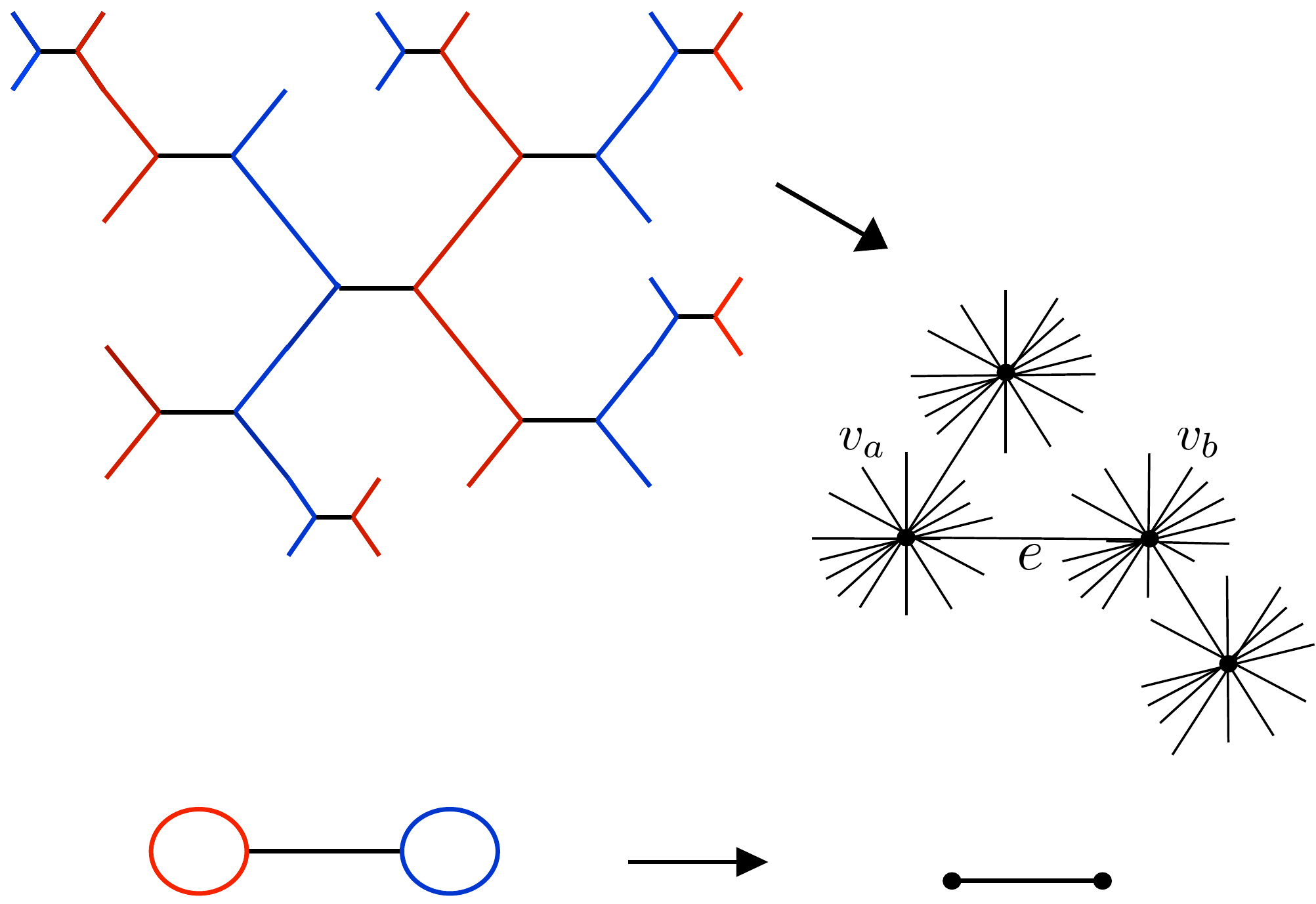}
\caption{The tree $T_{\alpha,\beta}$ as a collapse of the trivalent tree.}
\label{fig:tree}
\end{center}
\end{figure}

It is well known that the subgroup of $\Mod(S)$ generated by $a,b$ is isomorphic to the free group of rank $2$ (see, for example, \cite[Theorem 6.1]{Clein} \cite[Proposition 3.3]{mangahas2010uniform}) and we make the identification $F_2 = \langle a,b \rangle \le \Mod(S)$. Let $T = T_{\alpha,\beta}$ be the Bass--Serre tree for the splitting $\langle a \rangle * \langle b \rangle$. In details, $T$ is the $F_2$--tree obtained by taking the universal cover of the ``barbell'' graph whose loops are labeled by $\{a,b\}$ and collapsing the lift of each $a$--edge and each $b$--edge. See Figure \ref{fig:tree}. Denote the image of the axis for $a$ by $v_a$ and note that this is the unique vertex of $T$ which is fixed by $a$. Similarly, denote the image of the axis for $b$ by $v_b$. Note that these vertices are joined by an edge, which we call $e$, and $e$ is a fundamental domain for the action $F_2 \curvearrowright T$.

We now define an equivariant map $\mathcal{O}:T \to \C(S)$. The vertex $v_a$ is mapped to $\alpha$,  $v_b$ is mapped to $\beta$ and $e$ is mapped to a fixed geodesic from $\alpha$ to $\beta$ in $\C(S)$. Using the identification $F_2 \to \langle a, b \rangle \le \Mod(S)$, extend the map to all of $T$ by equivariance. This is well-defined since $a$ fixes $\alpha$ and $b$ fixes $\beta$. The main result of this section comes from the following proposition:

\begin{proposition}[The tree quasi-isometrically embeds] \label{prop:tree_qi}
With notation as above, the $\langle a,b \rangle$--equivariant map $\mathcal{O}: T_{\alpha,\beta} \to \C(S)$ is a $(3,7)$--quasi-isometric embedding.
\end{proposition}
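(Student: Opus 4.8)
The plan is to show that the equivariant map $\mathcal{O}: T_{\alpha,\beta} \to \C(S)$ is a quasi-isometric embedding by comparing distances in the tree $T$ with distances in the curve graph $\C(S)$. Since $T$ is a tree and $e$ is a fundamental domain for the $F_2$-action, the distance $d_T(v, w)$ between two vertices on the orbit of $v_a, v_b$ is controlled by the syllable length of the group element carrying one to the other. Concretely, for $w \in \langle a, b \rangle$, the tree distance $d_T(v_a, w \cdot v_a)$ (or the analogous distance between translates) is comparable to $|w|_s$, up to bounded multiplicative and additive error; this is the standard relationship between the Bass--Serre tree metric for a free product and syllable length. So the upper bound on $d_{\C(S)}(\mathcal{O}(x), \mathcal{O}(y))$ is essentially automatic: since $\mathcal{O}$ sends $e$ to a fixed geodesic of length $d_S(\alpha,\beta)$ and is equivariant, concatenating images of edges along the geodesic in $T$ from $x$ to $y$ gives a path in $\C(S)$ whose length is at most (geodesic length)$\cdot d_T(x,y)$, yielding the $3$ multiplicative constant and the additive $7$ after accounting for the diameter of the image of a single edge.

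The substance of the proposition is the lower bound, and this is where Mangahas' Lemma (Lemma \ref{lem:mang}) does the work. First I would verify that the hypotheses of Lemma \ref{lem:mang} hold for our choice $a = T_\alpha^B$, $b = T_\beta^B$ with $B = 2M+7$. By the remark following that lemma, the annular projection conditions $d_\alpha(C_B, a^k \cdot C_B) > 2M+4$ and $d_\beta(C_A, b^k \cdot C_A) > 2M+4$ are satisfied precisely because $|B| \ge 2M+7$; this uses part (3) of Lemma \ref{lem:facts}, which gives $d_\alpha(\gamma, T_\alpha^N \gamma) \ge N - 2$. With the hypotheses in place, Mangahas' Lemma tells us that for any $w \in \langle a, b \rangle$, either $d_S(w\cdot \alpha, \alpha) \ge |w|_s$ or $d_S(w \cdot \beta, \beta) \ge |w|_s$. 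Translating this into the tree picture, $|w|_s$ is coarsely the tree distance moved by $w$, and the two alternatives correspond to the images $\mathcal{O}(w \cdot v_a) = w \cdot \alpha$ and $\mathcal{O}(w \cdot v_b) = w \cdot \beta$, so at least one of the relevant pairs of image vertices is separated in $\C(S)$ by at least $|w|_s$.

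The key step, and the main technical obstacle, will be bookkeeping the passage between the syllable-length formulation of Mangahas' Lemma and the uniform statement that $d_{\C(S)}(\mathcal{O}(x), \mathcal{O}(y)) \ge \tfrac{1}{3} d_T(x,y) - 7$ for \emph{all} pairs of vertices $x, y \in T$, not merely pairs of the form $(v_a, w \cdot v_a)$. The issue is that Mangahas' conclusion is an ``either/or'' that a priori controls only one of $w \cdot \alpha$ or $w \cdot \beta$, whereas the vertices $x, y$ we care about may both lie in the $\alpha$-orbit, both in the $\beta$-orbit, or be mixed. I would handle this by reducing to the case $x = v_a$ (or $v_b$) using equivariance, writing $y = w \cdot v_a$ or $y = w \cdot v_b$, and then carefully relating $d_T(x,y)$ to the syllable length of an appropriate representative $w$. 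One must check that in every case the ``wrong'' alternative in Mangahas' dichotomy can be ruled out or replaced by an adjacent vertex at bounded tree-distance, absorbing the discrepancy into the additive constant. Once this case analysis is complete and the constants are tallied, the two bounds combine to give the claimed $(3,7)$-quasi-isometric embedding.
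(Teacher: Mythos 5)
Your proposal is correct and follows essentially the same route as the paper: the upper bound comes from equivariance and the length-$3$ image of the fundamental edge, and the lower bound comes from Mangahas' Lemma after reducing to $x = v_a$ by equivariance, splitting into the cases $y = w \cdot v_a$ and $y = w \cdot v_b$. The paper resolves the either/or exactly via the second option you describe (nothing is ever ``ruled out''): since $d_S(\alpha,\beta) = d_S(w\cdot\alpha, w\cdot\beta) = 3$, the triangle inequality gives $d_S(\alpha, w\cdot\alpha) \ge d_S(\beta, w\cdot\beta) - 6$ and $d_S(\alpha, w\cdot\beta) \ge \max\{d_S(\alpha,w\cdot\alpha), d_S(\beta,w\cdot\beta)\} - 3$, so whichever alternative Mangahas provides, the discrepancy is absorbed into the additive constant $7$.
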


We remark that by examining the proof of Lemma \ref{lem:mang}, the constants in Proposition \ref{prop:tree_qi} can be improved upon. This, however, will not be necessary for our application.

Before turning to the proof of Proposition \ref{prop:tree_qi}, we first make a few remarks on distance in the (infinite valence) tree $T$. For a reduced word $w$ in $\langle a,b \rangle$, a \emph{syllable} of $w$ is a maximal subword of the form $a^k$ or $b^k$. The \emph{syllable length} of $w$, denoted $|w|_{s}$, is the number of syllables in the word $w$. For example, for the reduced word $w= a^{k_1}b^{k_2} \ldots a^{k_l} $, $|w|_s = l$. The syllable spelling of $w$ is exactly the normal form associated to the tree $T$ and, hence, can be used to compute distance in $T$. In particular, let $x$ and $y$ be two vertices of $T$; there are four cases depending on whether $x$ and $y$ are in the orbit of $v_a$ or $v_b$. For example, suppose that $x = v_a$ and that $y \in F \cdot v_a \setminus \{v_a\}$. Then there is $w \in F$ with $y = w\cdot x$ and we can write $w= a^{l_1}w'a^{l_2}$ as a reduced syllable decomposition. Now it is easily seen that $d_T(x,y) = d_T(v_a,w'v_a)= |w'|_s+1$. If $y\in F \cdot v_b$ then write $y = w \cdot v_b$ with $w = a^{l_1}w'b^{l_2}$ as a reduced syllable decomposition. Again we see that $d_T(x,y) = d_T(v_a,w'v_b) = |w'|_s+1$. These elementary observations will be used in the proof of Proposition \ref{prop:tree_qi}.

\begin{proof}[Proof of Proposition \ref{prop:tree_qi}]
Recall the definition of $C_{A}, C_{B}$ from the statement of Lemma \ref{lem:mang}. Let $x,y \in T$ and set $\gamma = \mathcal{O}(x)$ and $\delta = \mathcal{O}(y)$. Using equivariance and the fact that $d_{S}(\alpha,\beta) = 3$ we easily see that, $d_{S}(\gamma, \delta) \le 3 \cdot d_T(x,y)$. 

For the other inequality, we may assume (by equivariance) that $x$ equals either $v_a$ or $v_b$; since the proofs in each case are identical we assume that $x= v_a$. First, suppose that $y$ is in the orbit of $v_a$, i.e. that $y = w \cdot v_a$ for $w \in F$. By the definition of $\mathcal{O}$ and the triangle inequality, 
\begin{align*}
d_{S}(\gamma,\delta) &= d_{S}(\alpha, w \cdot \alpha)\\
&\ge d_{S}(\beta, w\cdot \beta)-6, 
\end{align*}
and thus by Lemma \ref{lem:mang}, $d_{S}(\gamma, \delta) \ge |w|_s -6 \ge d_{T}(x,y)-7$. 

If on the other hand $y$ is in the obit of $v_b$, then we choose $w \in F$ so that $y = w \cdot v_b$. By the triangle inequality $d_{S}(\alpha, w \cdot \beta) \ge d_{S}(\alpha , w \cdot \alpha) -3$ and $d_{S}(\alpha, w \cdot \beta) \ge d_{S}(\beta , w \cdot \beta) -3$. Hence, we may apply Lemma \ref{lem:mang} to conclude
\begin{align*}
d_{S}(\gamma,\delta) &= d_{S}(\alpha, w \cdot \beta)\\
&\ge |w|_s - 3 \\
&\ge d_T(x,y) -4.
\qedhere
\end{align*}
\end{proof}

We will say that $w \in \langle a,b \rangle$ is \emph{cyclically reduced} if it has the smallest syllable length among any of its conjugates. The following is an immediate corollary of Proposition \ref{prop:tree_qi}.
\begin{corollary}%[Translation length of iterated automorphisms]
 \label{cor:curve_trans}
Let $w \in \langle a,b \rangle \le \Mod(S)$ which is cyclically reduced. Then 
\[
|w|_s \le \ell_\C(w) \le 3|w|_s.
\]  
\end{corollary}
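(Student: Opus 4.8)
The plan is to deduce Corollary \ref{cor:curve_trans} directly from the quasi-isometric embedding established in Proposition \ref{prop:tree_qi}, using the standard fact that the stable translation length of an isometry is computed on an invariant axis. Recall that for any isometry $w$ of a metric space, the stable translation length is $\ell_\C(w) = \lim_{n\to\infty} d_S(w^n\cdot\gamma,\gamma)/n$ for any basepoint $\gamma$, and the same definition applies to the action of $w$ on the tree $T$. So the strategy is to compute the translation length in the tree $T$ exactly, and then transport both the upper and lower bounds across the quasi-isometry $\mathcal{O}$.

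First I would compute the translation length of $w$ acting on $T$. Since $\langle a,b\rangle$ acts on its Bass--Serre tree $T$ and $w$ is cyclically reduced (hence has minimal syllable length in its conjugacy class), $w$ acts as a hyperbolic isometry of $T$ whose translation length along its axis equals exactly $|w|_s$; this is the standard description of translation length in a Bass--Serre tree in terms of the cyclically reduced syllable normal form. Concretely, for a basepoint $x$ on the axis of $w$ one has $d_T(w^n\cdot x, x) = n|w|_s$ for all $n\ge0$, so $\ell_T(w) = |w|_s$.

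Next I would push this through $\mathcal{O}$. By Proposition \ref{prop:tree_qi}, for all $x,y\in T$ we have
\[
\frac{1}{3}d_T(x,y) - 7 \le d_S(\mathcal{O}(x),\mathcal{O}(y)) \le 3\,d_T(x,y).
\]
Fix a basepoint $x$ on the axis of $w$ and set $\gamma = \mathcal{O}(x)$. Applying the inequalities with $y = w^n\cdot x$, using $\mathcal{O}(w^n\cdot x) = w^n\cdot \gamma$ by equivariance and $d_T(w^n\cdot x,x) = n|w|_s$, gives
\[
\frac{n|w|_s}{3} - 7 \le d_S(w^n\cdot\gamma,\gamma) \le 3n|w|_s.
\]
Dividing by $n$ and letting $n\to\infty$, the additive constant $7$ washes out and the limits exist, yielding
\[
\frac{|w|_s}{3} \le \ell_\C(w) \le 3|w|_s.
\]
This is within a multiplicative factor of the claimed bound but not yet sharp on the lower end; to obtain the stated $|w|_s \le \ell_\C(w)$ I would instead invoke that the lower bound in Proposition \ref{prop:tree_qi} can, for the \emph{stable} length, be improved. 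The cleanest route is to observe directly from Lemma \ref{lem:mang} that for the cyclically reduced $w$ one has $d_S(w^n\cdot\alpha,\alpha)\ge n|w|_s - 6$ or the same with $\beta$ (the additive constant being independent of $n$), so that the stable length satisfies $\ell_\C(w)\ge |w|_s$ after dividing by $n$ and taking the limit.

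The main obstacle is getting the lower constant to be exactly $1$ rather than $1/3$. The quasi-isometry constant of $3$ arises from the coarse Lipschitz bound $d_S(\gamma,\delta)\le 3d_T(x,y)$, which is an honest distortion of distances; but when passing to \emph{stable} translation length one can bypass this by applying Lemma \ref{lem:mang} directly to the powers $w^n$ rather than routing through the quasi-isometry inequality, since Lemma \ref{lem:mang} gives $d_S(w^n\cdot\alpha,\alpha)\ge |w^n|_s = n|w|_s$ (up to the fixed additive loss from the triangle inequalities in the proof of Proposition \ref{prop:tree_qi}). Care is needed here to ensure $w^n$ is cyclically reduced and that $|w^n|_s = n|w|_s$, which holds precisely because $w$ is cyclically reduced so that no syllable cancellation or merging occurs between consecutive copies of $w$; the one subtlety is the possible merging of the last syllable of one copy with the first syllable of the next, but cyclic reducedness guarantees these involve different generators, so $|w^n|_s = n|w|_s$ exactly. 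With that combinatorial point verified, both bounds follow and the corollary is complete.
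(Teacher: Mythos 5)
Your proposal is correct and follows essentially the argument the paper intends: the corollary is stated as an immediate consequence of Proposition \ref{prop:tree_qi}, obtained by computing the translation length of the cyclically reduced word $w$ on the tree $T_{\alpha,\beta}$ (namely $|w|_s$) and transporting it through the equivariant quasi-isometric embedding $\mathcal{O}$, exactly as you do. Your extra care on the lower bound is the right reading: the black-box $(3,7)$ constants would only give $\tfrac{1}{3}|w|_s \le \ell_\C(w)$, but the proof of Proposition \ref{prop:tree_qi} actually establishes the one-sided inequality $d_S(\mathcal{O}(x),\mathcal{O}(y)) \ge d_T(x,y) - 7$ with multiplicative constant $1$ (equivalently, Lemma \ref{lem:mang} applied to the powers $w^n$, whose syllable length is $n|w|_s$ by cyclic reducedness), and dividing by $n$ kills the additive constant to give the stated $|w|_s \le \ell_\C(w)$.
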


The next lemma is elementary and is used to bound the stretch factors of pseudo-Anosovs obtained by iterating composition. 

\begin{lemma}%[Trace of iterated automorphisms]
 \label{lem:trace}
For any $i \ge2$, let 
$$a = \begin{pmatrix} 1 & i  \\ 0 & 1 \end{pmatrix}, \quad  b = \begin{pmatrix} 1 & 0  \\ -i& 1 \end{pmatrix}
$$
and set $w = a^{\epsilon_1}b^{\delta_1} \ldots a^{\epsilon_{k}}b^{\delta_{k}}$, where $\epsilon_i, \delta_i \in \{\pm1 \}$. Then $\mathrm{trace}(w) \le (2i)^{|w|}$.
\end{lemma}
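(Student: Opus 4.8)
The plan is to prove $\operatorname{trace}(w) \le (2i)^{|w|}$ by induction on the word length $|w|$, tracking not just the trace but the full matrix $w$ so that the inductive step has enough information to proceed. The difficulty with a naive induction on the trace alone is that the trace of a product is not controlled by the traces of the factors, so I would instead bound the absolute values of \emph{all four entries} of $w$ simultaneously and then read off the trace bound at the end.

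Concretely, I would prove by induction on $k$ (the number of letters, i.e. $|w|$) the stronger statement that every entry of $w = a^{\epsilon_1} b^{\delta_1} \cdots$ has absolute value at most $(2i)^{|w|}$; since the trace is a sum of two diagonal entries this would need a factor-of-two adjustment, so it is cleaner to track the quantity $\|w\|$ defined as the maximum absolute value among the four entries and show $\|w\| \le i^{|w|}$ times a mild constant, or to directly set up the induction so that the final trace estimate comes out as claimed. The base case is a single letter: $a^{\pm 1}$ and $b^{\pm 1}$ each have entries bounded by $i$, hence trace bounded by $2 \le 2i$. For the inductive step, write $w = w' \cdot s$ where $s \in \{a^{\pm1}, b^{\pm1}\}$ is the last letter and $w'$ is shorter; then each entry of $w$ is a sum of at most two products of the form (entry of $w'$) times (entry of $s$), and since the entries of $s$ are bounded by $i$ in absolute value while the relevant off-diagonal pattern of $a$ and $b$ means each row of $w$ picks up at most one factor of $i$ and one additive copy of a previous entry, the bound grows by a factor of at most $2i$ per letter.

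The key computation is to verify the multiplicative step carefully: right-multiplying by $a^{\pm 1} = \left(\begin{smallmatrix} 1 & \pm i \\ 0 & 1\end{smallmatrix}\right)$ sends columns $(c_1, c_2)$ of $w'$ to $(c_1, \pm i\, c_1 + c_2)$, and similarly for $b^{\pm1}$, so the new entries are bounded by $i \cdot \|w'\| + \|w'\| = (i+1)\|w'\| \le 2i \cdot \|w'\|$ since $i \ge 2$ gives $i + 1 \le 2i$. Iterating this from the base case yields $\|w\| \le i \cdot (2i)^{|w|-1} \le (2i)^{|w|}$, and in particular the trace, being the sum of two diagonal entries, satisfies $\operatorname{trace}(w) \le 2\|w\| \le 2i\cdot(2i)^{|w|-1} = (2i)^{|w|}$.

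I expect the main obstacle to be bookkeeping rather than genuine difficulty: one must choose the inductive invariant so that the constants close up exactly to $(2i)^{|w|}$ without an extra factor of $2$ creeping in. The clean way to avoid this, which I would adopt, is to induct on $\|w\|$ (max entry) with the bound $\|w\| \le (2i)^{|w|-1}\cdot i$ and only pass to the trace at the very end, so that the doubling from $\operatorname{trace} \le 2\|w\|$ is absorbed precisely into the final factor of $2i$. The hypothesis $i \ge 2$ is used exactly once, to guarantee $i + 1 \le 2i$, which is what makes the per-letter growth factor a clean $2i$.
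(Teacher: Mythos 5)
Your proof is correct, but it takes a genuinely different route from the paper's. The paper's argument is a two-line norm computation: it bounds $\mathrm{trace}(w) \le |w|_1 \le 2|w|_2$, where $|\cdot|_1$ and $|\cdot|_2$ denote the $l^1$- and $l^2$-norms on $2\times 2$ matrices, and then uses submultiplicativity of $|\cdot|_2$ across the $k$ two-letter blocks $a^{\epsilon_j}b^{\delta_j}$, each of which has $l^2$-norm at most $i^2+1$; this gives $\mathrm{trace}(w) \le 2(i^2+1)^k < (2i)^{2k} = (2i)^{|w|}$. You instead replace the norm machinery with an elementary letter-by-letter induction on the maximum entry, exploiting the unipotent form of the generators: right multiplication by $a^{\pm1}$ or $b^{\pm1}$ is a single column operation, so the max-entry norm grows by a factor of at most $i+1 \le 2i$ per letter, and the doubling from $\mathrm{trace}(w) \le 2\max$-entry is absorbed into the final factor of $2i$ exactly as you arrange. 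Both arguments close to the stated constant. What the paper's approach buys is brevity (given standard facts about matrix norms) and a slightly sharper per-pair constant, since $i^2+1 < (i+1)^2$; what yours buys is complete self-containedness and the observation that the bound holds for \emph{arbitrary} words in $a^{\pm1}, b^{\pm1}$ --- the alternating form of $w$ in the statement is never used, whereas the paper's grouping into blocks $a^{\epsilon_j}b^{\delta_j}$ does use it (though this is easily repaired). One small bookkeeping remark: the inequality $i+1 \le 2i$ that you flag as the one use of the hypothesis actually only needs $i \ge 1$, so your proof, like the paper's, does not genuinely require $i \ge 2$.
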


\begin{proof}
For a $2$-by-$2$ matrix $A$, let $| A |_1$ denote its $l^1$-norm and  $| A|_2$ its $l^2$-norm. For any such $A$,  $|A|_1 \le 2 |A|_2$ . Moreover, for any matrices $A$ and $B$, $|AB|_2 \le |A|_2|B|_2$. Hence,
\begin{align*}
\mathrm{trace}(w) &\le  |w|_1 \\
&\le 2|w|_2 \\
&\le 2 \prod_i |a^{\epsilon_i}b^{\delta_{i}} |_2  \\
&\le 2 (i^2+1)^{k} < (2i)^{2k}.
\qedhere
\end{align*}
\end{proof}

\begin{theorem}[Ratio bounds] \label{StretchBound} Let $\alpha, \beta$ be a filling pair of simple closed curves on $S$, and set $a= T^{B}_{\alpha}, b= T^{B}_{\beta}$. Let $w$ be a cyclically reduced word in $a,b$ satisfying $|w|= |w|_{s}$. Then 
\begin{align*}
&\ \tau(w)= \frac{\l_{\T}(w)}{\l_{\C}(w)} \leq \log(2B\cdot i(\alpha, \beta)). 
\end{align*}
\end{theorem}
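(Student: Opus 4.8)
The plan is to bound the numerator and denominator of $\tau(w)$ separately, then combine. For the denominator, I would invoke Corollary~\ref{cor:curve_trans} directly: since $w$ is cyclically reduced, we have $\ell_\C(w) \ge |w|_s = |w|$, so the curve graph translation length grows at least linearly in word length. This is the easy half and requires no new work.

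For the numerator, the strategy is to pass through the Teichm\"uller disk picture of Section~\ref{sec:teich}. Since $a = T_\alpha^B$ and $b = T_\beta^B$ correspond under the representation $\Psi$ of~(\ref{rep}) to the matrices
\[
\Psi(a) = \begin{pmatrix} 1 & Bi \\ 0 & 1 \end{pmatrix}, \qquad \Psi(b) = \begin{pmatrix} 1 & 0 \\ -Bi & 1 \end{pmatrix},
\]
with $i = i(\alpha,\beta)$, the word $w$ maps to a product of these matrices. Because $w$ is a word in $a,b$ with $|w| = |w|_s$ and $w$ realizes a pseudo-Anosov (being cyclically reduced and of positive word length in the free group, it maps to a hyperbolic element), Thurston's theorem gives $\ell_\T(w) = \log \lambda(w)$, where $\lambda(w)$ is the largest eigenvalue of $\Psi(w)$. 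I would then bound $\lambda(w)$ by the trace: for a hyperbolic matrix $\lambda \le \mathrm{trace}$ (up to the standard $\lambda + \lambda^{-1} = \mathrm{trace}$ relation, so $\lambda \le \mathrm{trace}(\Psi(w))$). Applying Lemma~\ref{lem:trace} with the parameter $Bi$ in place of $i$ yields $\mathrm{trace}(\Psi(w)) \le (2Bi)^{|w|}$, hence $\ell_\T(w) \le |w| \cdot \log(2B\,i(\alpha,\beta))$.

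Combining the two bounds gives
\[
\tau(w) = \frac{\ell_\T(w)}{\ell_\C(w)} \le \frac{|w| \cdot \log(2B\,i(\alpha,\beta))}{|w|} = \log(2B\, i(\alpha,\beta)),
\]
which is the claimed inequality.

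The main obstacle I anticipate is the bookkeeping that connects the abstract word $w$ in $\langle a,b\rangle \le \Mod(S)$ to the matrix product $\Psi(w)$ in the correct form for Lemma~\ref{lem:trace}. In particular, Lemma~\ref{lem:trace} is stated for words of the shape $a^{\epsilon_1}b^{\delta_1}\cdots a^{\epsilon_k}b^{\delta_k}$ with exponents $\pm 1$, whereas a general cyclically reduced $w$ with $|w| = |w|_s$ could begin or end with either generator and have exponents of either sign; one must check that the hypothesis $|w|=|w|_s$ forces each syllable to be a single generator (so $|w|$ counts the matrix factors correctly) and that $\Psi(a)$, $\Psi(b)$ together with their inverses all match the matrices of Lemma~\ref{lem:trace} after the rescaling $i \mapsto Bi$. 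The remaining subtlety is ensuring $w$ genuinely maps to a hyperbolic matrix so that Thurston's criterion applies and $\ell_\T(w)$ equals $\log$ of the top eigenvalue; this follows because a nontrivial cyclically reduced positive-syllable-length word is a nontrivial free group element mapping under $\Psi$ to a product of parabolics of opposite type, which is hyperbolic whenever $i \ge 2$, but I would state this carefully.
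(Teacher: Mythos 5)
Your proof is correct and is essentially the paper's own argument: both bound $\ell_\C(w)$ from below by $|w|_s$ via Corollary~\ref{cor:curve_trans}, and bound $\ell_\T(w)$ from above by $|w|\log(2B\, i(\alpha,\beta))$ using Thurston's eigenvalue description of the dilatation together with the trace estimate of Lemma~\ref{lem:trace} applied with parameter $B\, i(\alpha,\beta)$. The bookkeeping issues you flag (matching the word shape in Lemma~\ref{lem:trace}, and hyperbolicity of $\Psi(w)$ so that Thurston's criterion applies) are genuine but routine, and the paper's terse proof passes over them in exactly the way you propose to fill in.
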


\begin{proof}
Recall, as noted in Section \ref{sec:teich}, $\ell_\T(w)$ is equal to the logarithm of largest eigenvalue of the matrix corresponding to $w$.
Applying Lemma \ref{lem:trace} and Corollary \ref{cor:curve_trans}, we compute
\begin{align*}
\tau (w) &\le \frac{\log((2i)^{|w|})}{|w|_s}\\
&=\frac{|w|}{|w|_s}  \log(2i).
\end{align*}
Since $|w| =|w|_s$ and $i = B\cdot i(\alpha,\beta)$, this completes the proof.
\end{proof}

The following corollary of Theorem \ref{StretchBound} gives our construction of ratio optimizers.
\begin{corollary}[Ratio optimizers]  \label{cor:opt}
Let $\alpha, \beta$ be a filling pair of simple closed curves satisfying $i(\alpha, \beta) = \eta_{S}$. Then for $w$ a cyclically reduced word as in Theorem \ref{StretchBound},
\begin{align*} 
\tau(w) = \frac{\l_{\T}(w)}{\l_{\C}(w)}  &\leq \log(B \cdot \eta_{S}) \\
&\leq \log(2B \cdot \omega(S)). 
\end{align*}
\end{corollary}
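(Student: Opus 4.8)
The plan is to read the corollary off directly from Theorem \ref{StretchBound}, the only new ingredients being the optimal choice of filling pair together with the linear growth of the minimal filling number recorded in Lemma \ref{FillPair}. Concretely, I would take $\alpha,\beta$ to be a filling pair realizing the minimal possible geometric intersection number, so that $i(\alpha,\beta) = \eta_S = i_{g,p}$ in the notation of Section \ref{sec:curves}; such a pair exists by Lemma \ref{FillPair}. The first displayed inequality is then precisely the specialization of Theorem \ref{StretchBound} to this pair, applied to any cyclically reduced $w$ with $|w|=|w|_s$, since $i(\alpha,\beta)=\eta_S$.

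For the second inequality I would run through the five cases of Lemma \ref{FillPair} and verify that $\eta_S=i_{g,p}$ is bounded above by a uniform linear function of $\omega(S)=3g+p-4$. In the main regimes this is immediate: when $g\ge 3$ and $p=0$ one has $\eta_S=2g-1\le 3g-4=\omega(S)$, and when $g\ge 2$, $p\ge 1$ one has $\eta_S=2g+p-2\le 3g+p-4=\omega(S)$. In the remaining genus-$0$, genus-$2$, and $g=1$ cases the value of $\eta_S$ exceeds $\omega(S)$ only by a bounded additive constant, so that $\eta_S\le 2\,\omega(S)$ holds uniformly. Substituting $\eta_S\le 2\,\omega(S)$ into the bound $\log(B\cdot\eta_S)$ then gives $\tau(w)\le \log(2B\cdot\omega(S))$ directly, which is exactly the clean form $f(\omega)=\log(2B\cdot\omega)$ promised in the introduction.

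I do not expect a genuine obstacle here: the content is entirely in Theorem \ref{StretchBound}, and what remains is bookkeeping. The one point requiring care is confirming the uniform linear bound $\eta_S\le 2\,\omega(S)$ across the full case analysis, particularly the low-complexity cases where $\eta_S$ strictly exceeds $\omega(S)$, so that the factor $2$ in front of $B\cdot\omega(S)$ really does dominate all of them at once; since Lemma \ref{FillPair} supplies exact (or near-exact) values of $\eta_S$ in every regime, each case reduces to a one-line inequality. Finally, the uniformity of the constant $B=2M+7$ across all surfaces is inherited from the uniformity of the bounded geodesic image constant $M$ of Theorem \ref{BGIT}, so the resulting bound is genuinely uniform in $\omega(S)$.
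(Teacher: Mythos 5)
Your route is the same as the paper's intended one---the corollary is stated there without proof, as the immediate specialization of Theorem \ref{StretchBound} to a minimally intersecting filling pair combined with the linear bounds of Lemma \ref{FillPair}---but two of your steps fail as written, and they are exactly the points that needed care. First, Theorem \ref{StretchBound} gives $\tau(w)\le\log(2B\cdot i(\alpha,\beta))$, so its specialization to a pair with $i(\alpha,\beta)=\eta_S$ is $\tau(w)\le\log(2B\cdot\eta_S)$, not $\log(B\cdot\eta_S)$. Your assertion that the first displayed inequality is ``precisely the specialization'' of the theorem is therefore false: as written, that inequality is strictly stronger than what the theorem supplies. (The factor-of-two discrepancy is present in the paper's own statement and is best read as a typo, but a blind derivation cannot simply assert it away.) Running your argument from the honest bound $\log(2B\cdot\eta_S)$ together with your comparison $\eta_S\le 2\,\omega(S)$ produces $\log(4B\cdot\omega(S))$, not $\log(2B\cdot\omega(S))$; to land on $\log(2B\cdot\omega(S))$ you need the stronger inequality $\eta_S\le\omega(S)$, which Lemma \ref{FillPair} gives for $g\ge3$ (one checks $2g-1\le 3g-4$ and $2g+p-2\le 3g+p-4$) but which fails by a bounded additive amount in the genus-$0$, genus-$1$, and genus-$2$ cases. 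So the constants cannot be made to close up exactly as you describe; either the constants must be adjusted or the typo acknowledged.

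Second, your treatment of the exceptional cases rests on a false implication: from ``$\eta_S$ exceeds $\omega(S)$ only by a bounded additive constant'' you cannot conclude ``$\eta_S\le 2\,\omega(S)$ uniformly'' when $\omega(S)$ is small, and the conclusion genuinely fails for $S_{0,5}$, where Lemma \ref{FillPair} gives $\eta_S=p-1=4$ while $2\,\omega(S)=2$. That surface has to be excluded---implicitly it is, since Lemma \ref{lem:facts}, on which Proposition \ref{prop:tree_qi} and hence Theorem \ref{StretchBound} rest, assumes $\omega(S)>1$---but a complete proof must say so, because it is the unique surface of positive complexity violating your inequality. Relatedly, your ``immediate'' case $g\ge2$, $p\ge1$ quietly uses the formula $\eta_S=2g+p-2$ from Lemma \ref{FillPair}, which excludes $g=2$; for $S_{2,1}$ the correct value is $\eta_S=4>\omega(S)=3$, so that surface belongs with your exceptional cases. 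None of this threatens the substance: up to the multiplicative slop the paper tolerates, the corollary is indeed immediate from Theorem \ref{StretchBound}. But a correct write-up has to adjust the constants (replace $B$ by $2B$ in the first bound and $2B$ by $4B$ in the second, or invoke $\eta_S\le\omega(S)$ only in the regimes where it holds) and has to restrict to $\omega(S)\ge2$ when quoting $\eta_S\le 2\,\omega(S)$.
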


\section{Counting ratio optimizers in a Teichm\"uller Disk}

In this section, we show that our construction yields infinitely many ratio optimizers whose maximal cyclic subgroups in $\mbox{Mod}(S)$ are pairwise non-conjugate. That is, we will exhibit infinitely many ratio optimizers $\phi_{1},\phi_{2},...$ such that for each $i \neq j$, no power of $\phi_{i}$ is conjugate to a power of $\phi_{j}$. Since each of our ratio optimizers is contained in the group generated by $T_{\alpha}, T_{\beta}$ for $\alpha, \beta$ a minimally intersecting filling pair, it will follow that the Teichm{\"u}ller disk $\mathbb{D}(\alpha, \beta)$ is stabilized by infinitely many primitive, pairwise non-conjugate ratio optimizers. This will complete the proof of Theorem \ref{Infinite}.

To begin, let $w_{1}, w_{2},...$ be an infinite collection of distinct cyclically reduced words in $a,b$, satisfying:

\begin{enumerate}
\item $w_{i} \neq a^{\pm}, b^{\pm}$ 

\item $|w_{i}|= |w_i|_{s}$.

\end{enumerate} 

Since each $w_{i}$ is cyclically reduced and all words in the collection are distinct, the words are pairwise non-conjugate. Furthermore, by property $(1)$, each word $w_{i}$ has nonzero translation length on the tree $T_{\alpha, \beta}$, and therefore by Corollary \ref{cor:curve_trans}, each corresponds to a pseudo-Anosov mapping class under the map sending the free group generated by $a,b$ to the subgroup generated by the Dehn twists $T^B_{\alpha}, T^B_{\beta}$. 

We will refer to the pseudo-Anosov image of $w_{i}$ by $\mathcal{O}(w_{i})$. By Proposition \ref{prop:tree_qi}, $\mathcal{O}(w_{j})$ admits a uniformly quasigeodesic axis $A_{j}$. By property $(2)$ above, we may pass to a subsequence such that for each $k\ge0$ the initial subword of $w_i$ of length $k$ is eventually constant as $i\to \infty$. Translating this fact to the tree $T$ and possibly passing to a further subsequence, we have the following property: the axis of $w_{i}$ in $T$ shares a segment centered around the origin of length at least $i$ with the axis for $w_{i-1}$. Thus, there exists a bi-infinite quasi-geodesic $\mathcal{R}$ in $\mathcal{C}(S)$ and a point $x \in \mathcal{R}$ so that $\mathcal{O}(w_{i})$ admits an axis that shares a segment of length $i$ with $\mathcal{R}$, centered about $x$. Furthermore, as a consequence of Corollary \ref{cor:curve_trans} these axes do not fellow travel in $\mathcal{C}(S)$.

Now, let $\Gamma_{1} \subset \left\{\mathcal{O}(w_{2}), \mathcal{O}(w_{3}),...\right\}$ denote the set of words whose maximal cyclic subgroups are conjugate in $\mbox{Mod}(S)$ to the maximal cyclic subgroup determined by $\mathcal{O}(w_{1})$. By hyperbolicity of $\mathcal{C}(S)$, there is a constant $K>0$ so that for $\mathcal{O}(w_{i}) \in \Gamma_{1}$, there is a conjugator $c_{i} \in \mbox{Mod}(S)$ such that the quasigeodesic $c_{i} \cdot A_{i}$ $K$-fellow travels with $A_{1}$. Let $\l$ denote the stable translation length of $\mathcal{O}(w_{1})$. 

It follows that there exists a uniform constant $r$ depending only on $\l$, the hyperbolicity constant for $\mathcal{C}(S)$, and the quasigeodesic constants determined in Proposition \ref{prop:tree_qi}, so that for any two points $t,s$ on $c_{i} \cdot A_{i}$, there exists a power of $\mathcal{O}(w_{1})$ sending $t$ within $r$ of $s$. Hence the same is true for any two points on $A_{i}$, after replacing $\mathcal{O}(w_{1})$ with its conjugate by $c_{i}^{-1}$. 

We first show that $|\Gamma_{1}| <\infty$. Assume by contradiction that $\Gamma_{1}$ is infinite. Then there exists $\mathcal{O}(w_{i}) \in \Gamma_{1}$ with $i$ arbitrarily large. Choose such an $i \gg 1$, and let $y,z$ denote the endpoints of the segment of $\mathcal{R}$ that $A_{i}$ shares. Note that by construction $y,z \in A_j$ for all $j\ge i$.

Then for any $j>i$, $c_{i}^{-1}c_{j}$ sends $A_{j}$ to $A_{i}$, and post-composing this with some power $e_{j}$ of $c_{i}^{-1} \mathcal{O}(w_{1}) c_{i}$ sends each of $y$ and $z$ within $r$ of themselves. By acylindricity of the action of $\mbox{Mod}(S)$ on $\mathcal{C}(S)$ \cite{bowditch2008tight}, there are at most finitely many mapping classes with this property. Hence, \[ \left\{ (c_{i}^{-1} \mathcal{O}(w_{1})^{e_{j}} c_{i}) c_{i}^{-1} c_{j}\right\}_{j>i}:= \left\{c'_{j} \right\}_{j>i}\]
is a finite collection of mapping classes. We note that $c_{i}^{-1} c_{j} \neq c_{i}^{-1}c_{k}$ for any $j \neq k$, as the axes $A_{j}$ and $A_{k}$ do not fellow travel, and the inverse of $c_{i}^{-1}c_{j}$ sends the endpoints at infinity of $A_{i}$ to those of $A_{j}$. Moreover, $c'_{j}$ is obtained from $c_{i}^{-1}c_{j}$ by post-composition with a map that fixes the endpoints of $A_{i}$, and therefore $c'_{j} \neq c'_{k}$ for $k \neq j$. Thus $|\Gamma_{1}|<\infty$.

It follows that we may pass to a subsequence so that no corresponding pseudo-Anosov determines the same maximal cyclic subgroup up to conjugacy as $\mathcal{O}(w_1)$.
Now we simply iterate this argument. By the exact same logic, the set $\Gamma_{i}$ of all maps in our collection which determine the same (up to conjugacy) maximal cyclic subgroup as $\mathcal{O}(w_{i})$ is finite, and thus 
we can pass to a further subsequence all of whose terms determine pseudo-Anosov mapping classes which are distinct, up to conjugacy and powers, from those already obtained.

\section{Ratio optimizers in the Johnson filtration and point pushing subgroups}
Fix $S = S_{g,p}$ with $g\ge2$ and $p\in \{0,1\}$ and let $\alpha$ and $\beta$ be separating curves of $S$ which fill and intersect minimally, i.e. $i(\alpha,\beta) = i^{sep}_{g,p}$. Recall that by Lemma \ref{SepPair}, there is a constant $C \ge0$, independent of $S$, such that $i(\alpha_{g}, \beta_{g}) \leq C \cdot \omega(g,p)$. Let $a = T^B(\alpha)$ and $b=T^B(\beta)$.

\begin{theorem}
There is a constant $C_J \ge0$  satisfying the following.
Let $S = S_{g,0}$ or $S_{g,1}$ with $g\ge 2$ and denote by $J_k(S)$ the $k$th term of the Johnson filtration of $\Mod(S)$. Then there exist $f_k \in J_k(S)$ with 
$$
\tau(f_k) = \frac{\ell_\T(f_k)}{\ell_\C(f_k)} \le C_J \log \omega(S).
$$
In other words, there are ratio optimizers arbitrarily deep into the Johnson filtration.
\end{theorem}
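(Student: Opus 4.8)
The plan is to produce, for each $k$, an explicit nested commutator in $a=T_\alpha^B$ and $b=T_\beta^B$ which lies deep in the Johnson filtration (because $\alpha,\beta$ are separating) and which is simultaneously a ratio optimizer (because its word length equals its syllable length, so that Theorem~\ref{StretchBound} applies verbatim). The surface, the separating filling pair $\alpha,\beta$, and the generators $a,b$ are exactly those fixed at the start of this section, so $\langle a,b\rangle\cong F_2$ and $i(\alpha,\beta)\le C\cdot\omega(S)$ by Lemma~\ref{SepPair}.

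First I would record two standard facts. Since $\alpha$ and $\beta$ are separating and $g\ge2$, the twists $T_\alpha,T_\beta$ lie in the kernel of the first Johnson homomorphism, i.e. in the Johnson kernel $J_2$; hence $a,b\in J_2$. Second, the Johnson filtration is a central filtration, so $[J_i,J_j]\subseteq J_{i+j}$. Combining these, any weight-$w$ iterated commutator in $a,b$ lies in $J_{2w}$. I would then define $c_2=[a,b]$ and $c_{n+1}=[c_n,b]$, so that $c_n$ has weight $n$ and, by the two facts, $c_n\in J_{2n}$.

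The step I expect to require the most care is the combinatorial claim that, written as a reduced word in $a,b$, each $c_n$ is fully alternating (consecutive letters are powers of distinct generators), is cyclically reduced, and satisfies $|c_n|=|c_n|_s=2^n$. This I would prove by induction from the identity $c_{n+1}=c_n\,b\,c_n^{-1}b^{-1}$: maintaining the inductive hypothesis that $c_n$ begins with $a$ and ends with $b^{-1}$, the only cancellation is the trailing $b^{-1}$ of $c_n$ against the inserted $b$, and one checks directly that the alternating pattern survives at each of the three junctions, with length $(2^n-1)+2^n+1=2^{n+1}$. The exponential growth of $|c_n|$ is harmless, since only the ratio $|c_n|/|c_n|_s=1$ enters the bound; this is the whole point of using alternating commutators rather than, say, left-normed commutators that accumulate long syllables.

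Finally I would assemble the estimate. Each $c_n$ is cyclically reduced with $|c_n|=|c_n|_s$, so it satisfies the hypotheses of Theorem~\ref{StretchBound}; moreover $|c_n|_s>0$ gives $\ell_\C(c_n)>0$ by Corollary~\ref{cor:curve_trans}, so $c_n$ acts loxodromically on $\C(S)$ and is pseudo-Anosov. Theorem~\ref{StretchBound} then yields $\tau(c_n)\le\log(2B\cdot i(\alpha,\beta))\le\log(2BC\cdot\omega(S))\le C_J\log\omega(S)$ for a uniform constant $C_J$ (using $\omega(S)\ge2$ to absorb the additive constant, so $C_J$ depends only on $B$ and $C$). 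Setting $f_k=c_{\lceil k/2\rceil}\in J_{2\lceil k/2\rceil}\subseteq J_k$ then gives the desired element and completes the proof; everything beyond the alternation induction is a direct application of the machinery already established.
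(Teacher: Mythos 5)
Your proposal is correct and follows essentially the same route as the paper: iterated commutators of powers of the separating twists from Lemma \ref{SepPair}, centrality of the Johnson filtration, and the ratio bound of Theorem \ref{StretchBound} via Corollary \ref{cor:curve_trans} and Lemma \ref{lem:trace}. The only (harmless) differences are that the paper forms alternating iterated commutators of $w_1 = aba$ and $w_2 = bab$ and uses only $a,b \in J_1$, whereas you take left-normed commutators $c_{n+1}=[c_n,b]$ and invoke the stronger (also standard) fact that separating twists lie in $J_2$; if anything your variant is slightly tidier, since your $c_n$ satisfy $|c_n|=|c_n|_s$ exactly, with no syllable merging at the commutator junctions.
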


\begin{proof}
Set $w_1 = aba$ and $w_2 = bab$ for $a$ and $b$ as defined at the beginning of this section. Set 
$$
f_k= [ \ldots [[w_1,w_2],w_1] \ldots w_*]
$$
which is $k$ iterated commutators alternating between $w_1$ and $w_2$. Note that by construction $|f_k| = |f_k|_s$, i.e. each syllable of $f_k$ has length $1$. Since by definition $a,b \in J_1$ the same is true for $w_1,w_2$. Further, as the Johnson filtration $\{J_k\}$ is a central series, see \cite{BaLu} and \cite{morita1991structure}, we have $f_k \in J_k$.

By Corollary \ref{cor:curve_trans}, $\ell_\C(f_k) \ge \frac{1}{3} |f_k|_s =  \frac{1}{3} |f_k|$. Moreover, using Lemma \ref{lem:trace} we can directly compute an upper bound for the dilatation, which (up to a uniform constant) is a product of $|f_{k}|$ with $\log(\omega(S))$. This completes the proof.  
\end{proof}

We now construct ratio optimizers in the point-pushing subgroup $PP_g < \mbox{Mod}(S_{g,1})$ of the mapping class group of a once-punctured surface. To achieve this, it suffices to construct a pair $(\alpha, \beta)$ of curves on $S_{g,1}$ which $(1)$ fill the surface, $(2)$ have geometric intersection number at most some fixed polynomial function of $g$, and $(3)$ such 
that $\alpha$ and $\beta$ are isotopic after forgetting the puncture. Assuming the existence of such a pair, note that the pseudo-Anosov $T^{B}_{\alpha} T_{\beta}^{-B}$ lies in $PP_g$, and by Theorem \ref{StretchBound}, it will be a ratio optimizer. From this, we will obtain:

\begin{theorem} \label{PPsection}
There exists a uniform constant $C_P\ge0$ satisfying the following. Let $S= S_{g,1}$ with $g\ge2$ and let $PP_{g} \le \Mod(S)$ be the points pushing subgroup of its mapping class group. Then there is $\phi \in PP_{g}$ with 
\[
\tau(\phi) = \frac{\ell_\T(\phi)}{\ell_\C(\phi)} \le C_P \log \omega(S).
\]
\end{theorem}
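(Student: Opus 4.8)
\textbf{Proof proposal for Theorem \ref{PPsection}.} The plan is to reduce the theorem to the key existence statement already outlined before the theorem: it suffices to produce a filling pair $(\alpha,\beta)$ on $S_{g,1}$ with $i(\alpha,\beta)$ bounded by a fixed polynomial (in fact linear) function of $g$, and with the crucial property that $\alpha$ and $\beta$ become isotopic after filling in the puncture. Given such a pair, the mapping class $\phi = T^B_\alpha T^{-B}_\beta$ is pseudo-Anosov since $\alpha,\beta$ fill (so $\langle T_\alpha, T_\beta\rangle$ maps into $PSL_2(\mathbb R)$ as in Section \ref{sec:teich}, and the word $ab^{-1}$ is a hyperbolic element), and it satisfies $|\phi| = |\phi|_s = 2$, so Theorem \ref{StretchBound} immediately gives $\tau(\phi) \le \log(2B \cdot i(\alpha,\beta)) \le \log(2B \cdot \mathrm{poly}(g)) \le C_P \log\omega(S)$. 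The point is that $\phi \in PP_g$: after forgetting the puncture, $\alpha$ and $\beta$ become isotopic, so $T_\alpha$ and $T_\beta$ become equal in $\Mod(S_{g,0})$, whence $\phi = T^B_\alpha T^{-B}_\beta$ maps to the identity under $\Mod(S_{g,1}) \to \Mod(S_{g,0})$, i.e. $\phi$ lies in the kernel $PP_g$.

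The main work, then, is the construction of the filling pair. First I would take a minimally intersecting filling pair $(\bar\alpha, \bar\beta)$ on the closed surface $S_{g,0}$, as furnished by Lemma \ref{FillPair}, so that $i(\bar\alpha,\bar\beta) = i_{g,0}$ is linear in $\omega(S)$. The idea is to obtain $\alpha$ and $\beta$ on $S_{g,1}$ as two pushoffs of the \emph{same} underlying closed curve that differ only by how they are routed around the puncture. Concretely, let $\bar\gamma$ be a single essential simple closed curve on $S_{g,0}$ chosen so that it, together with its image under point-pushing, fills; place the puncture at a point lying on $\bar\gamma$, and let $\alpha$ and $\beta$ be the two boundary curves of a small annular neighborhood of $\bar\gamma$ in $S_{g,1}$ (equivalently, two parallel copies of $\bar\gamma$ with the puncture lying between them). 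Then $\alpha$ and $\beta$ are isotopic to $\bar\gamma$ after forgetting the puncture, so condition (3) holds automatically, but on $S_{g,1}$ they are non-isotopic (they bound a once-punctured annulus rather than an annulus), and the difference $T_\alpha T_\beta^{-1}$ is exactly the point-push of $\bar\gamma$ around the puncture.

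The delicate point, and the one I expect to be the main obstacle, is arranging condition (1), that the resulting pair fills $S_{g,1}$, while keeping condition (2) under control. Two parallel copies of a single simple closed curve never fill, so the naive pushoff construction must be modified: I would instead start from a filling pair $(\bar\alpha,\bar\beta)$ on $S_{g,0}$ whose two components are isotopic to one another on $S_{g,0}$ (or can be made so) but whose pushoffs to $S_{g,1}$ remain distinct and still fill. The cleanest route is likely to build the pair directly: take a filling curve pair and realize $\phi$ as the point-push $P(\bar\gamma)$ along a filling closed curve $\bar\gamma$, using that point-pushing along a filling curve yields a pseudo-Anosov (a result of Kra), and then identify $P(\bar\gamma)$ with a product $T_\alpha T_\beta^{-1}$ of twists about the two boundary components of an annular neighborhood of $\bar\gamma$; here $\alpha,\beta$ automatically fill on $S_{g,1}$ precisely because $\bar\gamma$ fills on $S_{g,0}$, and $i(\alpha,\beta)$ is at most twice the self-intersection-type data of $\bar\gamma$, which is linear in $\omega(S)$. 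Verifying that this pair fills $S_{g,1}$ — that no essential simple closed curve, including one encircling the puncture, is disjoint from $\alpha\cup\beta$ — is the step requiring care, and I would handle it by the same complementary-region (no bigon, no unpunctured/punctured disk) analysis used in the proof of Lemma \ref{SepPair}. Once filling is established, the estimate on $\tau(\phi)$ is purely formal via Theorem \ref{StretchBound}, and membership in $PP_g$ is immediate from the construction.
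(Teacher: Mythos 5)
Your reduction step is correct and is exactly the paper's: given a filling pair $(\alpha,\beta)$ on $S_{g,1}$ with polynomially bounded intersection number whose members become isotopic after filling in the puncture, the element $\phi = T_\alpha^B T_\beta^{-B}$ lies in $PP_g$, and Theorem \ref{StretchBound} applied to the cyclically reduced word $ab^{-1}$ (which satisfies $|w|=|w|_s$) gives $\tau(\phi) \le \log(2B\cdot i(\alpha,\beta)) = O(\log\omega(S))$. The gap is in the construction of the pair, and it is not minor: your ``cleanest route'' is internally inconsistent. You want $\bar\gamma$ to fill $S_{g,0}$ (so that Kra's theorem makes $P(\bar\gamma)$ pseudo-Anosov, and so that the resulting pair fills), while simultaneously writing $P(\bar\gamma) = T_\alpha T_\beta^{-1}$ with $\alpha,\beta$ ``the two boundary curves of an annular neighborhood of $\bar\gamma$.'' But for $g\ge 2$ a filling closed curve is never simple, so it has no annular neighborhood: a regular neighborhood of a self-intersecting loop has several boundary components, none isotopic to $\bar\gamma$ even after forgetting the puncture, and the identity $P(\gamma)=T_{\gamma_1}T_{\gamma_2}^{-1}$ holds only for \emph{simple} loops $\gamma$. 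Conversely, if $\bar\gamma$ is simple, its two pushoffs are disjoint and $P(\bar\gamma)$ is reducible, not pseudo-Anosov --- the very problem you flagged. The two requirements cannot be satisfied at once, so no filling pair is actually produced.

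The missing idea (which is the paper's construction) is to decouple the point push from the filling pair: take a filling pair $(\rho,\delta)$ of simple closed curves on $S_{g,0}$ with $i(\rho,\delta)$ linear in $g$ (Lemma \ref{FillPair}), place the puncture inside the annulus between two parallel copies $\delta_1,\delta_2$ of $\delta$, and let $f_\delta = T_{\delta_1}^3\circ T_{\delta_2}^{-3}$, which is a point-pushing map. The filling pair on $S_{g,1}$ is then $(\rho, f_\delta(\rho))$ --- the curve $\rho$ together with its image under the push, not the push curves themselves. Condition (3) is automatic since $f_\delta \in PP_g$. Filling is verified with Ivanov's twist inequality (Lemma \ref{TwistInequality}): any essential $\gamma \subset S_{g,1}$ disjoint from $\rho$ remains essential in $S_{g,0}$ and hence must intersect both $\delta_1$ and $\delta_2$ (because $(\rho,\delta)$ fills $S_{g,0}$), and the lower bound in the inequality then forces $i(f_\delta(\rho),\gamma)>0$; note the exponent $\pm 3$ is chosen precisely so that this lower bound is positive. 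The same inequality bounds $i(\rho,f_\delta(\rho))$ by a quadratic function of $g$, which is harmless since only its logarithm enters the bound on $\tau(\phi)$.
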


To construct the desired filling pair, begin with a filling pair $(\rho, \delta)$ of non-separating curves on a closed surface $S_{g,0}$ with $i(\rho, \delta)$ bounded above by some fixed linear function of $g$. For example, $(\rho, \delta)$ could be a minimally intersecting filling pair on $S_{g,0}$.

Let $\delta_{1}, \delta_{2}$ be two parallel copies of $\delta$, and puncture the surface $S_{g,0}$ on the interior of the annulus bounded by $\delta_{1}$ and $\delta_{2}$ to form the surface $S = S_{g,1}$. Note that $f_{\delta}:=T_{\delta_{1}}^{3} \circ T_{\delta_{2}}^{-3}$ is a point-pushing map in $\mbox{Mod}(S_{g,1})$. We claim that $\rho$ fills with $f_{\delta}(\rho)$, and that $i(\rho, f_{\delta}(\rho))$ is bounded above by a quadratic function of $g$. 

We first show that these two curves jointly fill $S_{g,1}$; that is we must show that if $\gamma$ is any essential simple closed curve, $\gamma$ must intersect either $\rho$ or $f_{\delta}(\rho)$. We use the following inequality as seen in \cite{ivanov92}:

\begin{lemma} \label{TwistInequality}
Let $c_{1},...,c_{m}$ be a collection of pairwise disjoint, pairwise non-homotopic simple closed curves on a surface $S$ with negative Euler characteristic, let $\textbf{S}:=(s_{1},... s_{m}) \in \mathbb{Z}^{m}$, and let $T^{\mathbb{S}}$ denote the composition of Dehn twists $c_{1}^{s_{1}} \circ...\circ c_{m}^{s_{m}}$. Then for any simple closed curves $\gamma, \rho$, 
\begin{align*}
 \sum_{i=1}^{m} (|s_{i}-2|) i(\rho, c_{i}) i(c_{i}, \gamma)-  i(\rho, \gamma) &\leq i(T^{\textbf{S}}(\rho), \gamma)\\
 &\le \sum_{i=1}^{m} |s_{i}| i(\rho, c_{i}) i(c_{i}, \gamma)+  i(\rho, \gamma).
 \end{align*}

\end{lemma}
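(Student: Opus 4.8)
The plan is to understand the multitwist $T^{\mathbf{S}}$ concretely inside disjoint annular neighborhoods of the curves $c_i$ and then to count intersections with $\gamma$ by hand, proving the two inequalities separately. First I would fix a complete hyperbolic metric on $S$ and realize $\rho$, $\gamma$, and all of the $c_i$ simultaneously as geodesics, so that every pair among them is in minimal position and no three curves share a point. I would then choose pairwise disjoint closed annular neighborhoods $A_i \supset c_i$, thin enough that $A_i$ meets $\rho$ in exactly $i(\rho,c_i)$ and $\gamma$ in exactly $i(c_i,\gamma)$ spanning arcs (each running from one boundary of $A_i$ to the other), and thin enough that $\rho \cap \gamma$ lies entirely outside $\bigcup_i A_i$. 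Because the $c_i$ are disjoint, the twists are supported on the disjoint $A_i$ and commute, so a concrete representative of $T^{\mathbf{S}}(\rho)$ is obtained by leaving $\rho$ unchanged outside $\bigcup_i A_i$ and replacing each of its $i(\rho,c_i)$ spanning arcs inside $A_i$ by an arc winding $s_i$ times around $c_i$.

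For the upper bound I would simply count the transverse intersections of this representative with $\gamma$. Outside the annuli the curves agree with $\rho$ and $\gamma$, contributing at most $i(\rho,\gamma)$ points, while inside $A_i$ a single arc winding $s_i$ times crosses each of the $i(c_i,\gamma)$ spanning arcs of $\gamma$ at most $|s_i|$ times; hence the $i(\rho,c_i)$ wound arcs contribute at most $|s_i|\, i(\rho,c_i)\, i(c_i,\gamma)$ points in $A_i$. Summing over $i$ and using that $i(T^{\mathbf{S}}(\rho),\gamma)$ is at most the number of intersections of any transverse representative yields the right-hand inequality.

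The lower bound is the main obstacle, since $i(T^{\mathbf{S}}(\rho),\gamma)$ is a minimum over all representatives and one must show that isotopies cannot cancel too many of the intersections counted above. Here I would invoke the bigon criterion: our representative is in minimal position except for the intersections that can be removed by eliminating bigons, and each bigon removal drops the intersection count by exactly two, so it suffices to bound the number of removable bigons. I would argue that every such bigon is localized, either lying in the complement of $\bigcup_i A_i$ (where it would be a bigon between $\rho$ and $\gamma$, excluded by minimal position) or meeting some $A_i$, and then run an innermost-bigon analysis inside each annulus. This shows that bigon removal destroys at most the two extreme windings of each spanning arc of $\rho$ together with at most the $i(\rho,\gamma)$ external intersections, which is precisely the source of the coefficient $|s_i|-2$ and of the subtracted $i(\rho,\gamma)$ on the left. (For the lower bound to be consistent with the upper one, the coefficient must be read as $|s_i|-2$ rather than $|s_i-2|$; the two agree exactly when $s_i\ge 2$.)

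Finally I would assemble the estimates. Since the annuli are pairwise disjoint, the winding counts in distinct $A_i$ are independent and simply add, so no induction on $m$ is needed: the simultaneous count over all annuli gives both inequalities at once. The only genuinely delicate input is the localized bigon analysis bounding the removable intersections; everything else is bookkeeping on the annular neighborhoods. As this is exactly Ivanov's estimate, in the final write-up one may instead cite \cite{ivanov92} directly and suppress the bigon argument.
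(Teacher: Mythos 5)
The paper does not actually prove this lemma: it is quoted directly from Ivanov, with the citation \cite{ivanov92} standing in for a proof. So your closing remark --- that one may simply cite Ivanov and suppress the bigon argument --- is exactly what the authors do, and on that route your proposal and the paper coincide. You also correctly caught the misprint in the statement: the lower-bound coefficient must be $|s_i|-2$, not $|s_i-2|$ (with the printed version, taking $s_i$ large and negative would make the lower bound exceed the upper bound), and Ivanov's statement indeed reads $(|s_i|-2)\, i(\rho,c_i)\, i(c_i,\gamma)$. That observation is worth keeping.

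As a standalone proof, however, your sketch has a genuine gap in the lower bound. The upper bound is fine and routine: build the explicit representative supported in disjoint annuli $A_i$ and count intersections. But for the lower bound you assert that every bigon between this representative and $\gamma$ is ``localized,'' so that an innermost-bigon analysis inside each annulus shows only the two extreme windings per arc (plus the $i(\rho,\gamma)$ external points) can be cancelled. This is not justified: a bigon has one side on the twisted curve and one side on $\gamma$, and either side may traverse several annuli $A_i$ and several complementary regions, so bigons are global objects, not confined to a single annulus. Moreover, successive bigon removals alter the configuration, so a per-annulus count of removable intersections does not obviously bound the total number of removals needed to reach minimal position. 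Showing that, in any minimal-position representative, enough of the winding intersections survive is precisely the content of Ivanov's argument (see also Proposition 3.4 of \cite{farb2011primer} for the single-twist case), and it requires more than the localization claim you make. So the proposal is incomplete at exactly the step you yourself flag as delicate; it is rescued only by falling back on the citation, which is what the paper does.
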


Now suppose $i(\gamma, \rho)=0$. Then since $\rho$ fills with $\delta$ on the original closed surface, it follows that $i(\delta_{j}, \gamma) \neq 0$ for $j= 1,2$. Thus the left hand side of the inequality of Lemma \ref{TwistInequality} is non-zero, so $\gamma$ must intersect $f_{\delta}(\rho)$. 

The quadratic bound on $i(\rho, f_{\delta}(\rho))$ follows from the linear bound on $i(\rho, \delta)$, and another application of 
Lemma \ref{TwistInequality}. This completes the proof of Theorem \ref{PPsection}.

\bibliography{biblio.bbl}
\bibliographystyle{amsalpha}

\end{document}